\newtheorem{theorem}{Theorem} 
\newtheorem{lemma}[theorem]{Lemma}
\theoremstyle{definition}
\newtheorem{definition}[theorem]{Definition} 
\newtheorem{remark}[theorem]{Remark} 
\newcommand{\dint}{\mathrm{d}}
\newcommand{\RR}{\mathbb{R}}
\newcommand{\NN}{\mathbb{N}}
\renewcommand{\SS}{\mathbb{S}}
\newcommand{\Haus}{\mathcal{H}}
\newcommand{\Leb}{\mathcal{L}}
\newcommand{\Radon}{\mathcal{M}}
\newcommand{\llc}{\,\text{\Large{$\llcorner$}}} 
\newcommand{\loc}{\textup{loc}}
\newcommand{\veps}{\varepsilon}
\newcommand{\wto}{\rightharpoonup}
\newcommand{\adm}{\mathcal{A}}
\newcommand{\Ebulk}{E_{\textrm{bulk}}}
\newcommand{\Einterface}{E_{\textrm{int}}}
\newcommand{\spt}{\operatorname{spt}}
\newcommand{\qsharp}{q_\sharp} 
\newcommand{\charac}{1}
\newcommand{\Cof}{{\rm Cof}\,}
\newcommand{\KB}{\color{black}}
\newcommand{\UUU}{\color{black}} 
\newcommand{\MK}{\color{black}}
\newcommand{\MAK}{\color{black}}
\newcommand{\EEE}{\color{black}}
\newcommand{\FFF}{\color{black}}
\newcommand{\NNN}{\color{black}}
\DeclareMathOperator*\argmin{\text{arg\,min}}
\title[Curvature-dependent Eulerian interfaces]{Curvature-dependent Eulerian \\ interfaces in elastic solids}
\author[K. Brazda]{Katharina Brazda} 
\address[Katharina Brazda]{Faculty of Mathematics, University of Vienna, 
Oskar-Morgenstern-Platz 1, A-1090 Vienna, Austria}
\email{katharina.brazda@univie.ac.at}
\author[M. Kru\v z\'\i k]{Martin Kru\v z\'\i k}
\address[Martin Kru\v{z}\'ik]{Czech Academy of Sciences, Institute of Information Theory and Automation, Pod vod\' arenskou ve\v z\' \i\ 4, 182 00, Prague 8, Czechia.}
\email{kruzik@utia.cas.cz}
\author[F. Rupp]{Fabian Rupp} 
\address[Fabian Rupp]{Faculty of Mathematics, University of Vienna, 
Oskar-Morgenstern-Platz 1, A-1090 Vienna, Austria}
\email{fabian.rupp@univie.ac.at}
\author[U. Stefanelli] {Ulisse Stefanelli} 
\address[Ulisse Stefanelli]{Faculty of Mathematics, University of Vienna, 
Oskar-Morgenstern-Platz 1, A-1090 Vienna, Austria,\,\&
Vienna Research Platform on Accelerating Photoreaction Discovery, University of Vienna, W\"ahringerstrasse 17, A-1090 Vienna, Austria,\,\&
Istituto di Matematica Applicata e Tecnologie Informatiche E. Magenes,
via Ferrata 1, I-27100 Pavia, Italy.
}
\email{ulisse.stefanelli@univie.ac.at}
\date{\today}
\begin{document}

 \begin{abstract}
 \MK
We propose a sharp-interface model for a hyperelastic material consisting of two phases. In this model, phase interfaces are treated in the deformed configuration, resulting in a fully Eulerian interfacial energy. \FFF In order to penalize large curvature of the interface, we include a geometric term featuring a curvature varifold. \EEE 
Equilibrium solutions are \UUU proved \MK to exist via minimization. We then utilize this model in an Eulerian topology optimization problem that incorporates \UUU a \MK curvature penalization.
\MK
 
 \end{abstract}
\maketitle

\section{Introduction}\label{sec:intro}
\MK
In the field of elasticity, it is commonly assumed that experimentally observed patterns in materials correspond to the minimization of a suitable \UUU phase-dependent \MK energy. \UUU Indeed, some \MK materials have multiple phases, and the optimal energetic configuration is \UUU often \MK achieved by creating spatial microstructures composed of these phases. These microstructures \UUU feature their 
\MK own unique size, shape, and distribution (such as grains, precipitates, dendrites, spherulites, lamellae, or pores). The phases can be distinguished from each other by their various crystalline, semicrystalline, or amorphous \UUU properties, \MK which can be \UUU experimentally identified \MK through microscopy techniques.

To fully understand the behavior of a material, it is necessary to \UUU characterize the relation \MK 
between the macroscopic properties and the underlying phenomena occurring \UUU at \MK  the microstructural scale. \UUU To shed light on the multiscale nature of this phenomenon is paramount 
\MK for optimizing material performance and developing new materials with tailored properties.

A prominent example of materials with microstructure are shape memory alloys, \UUU showing a highly symmetric crystallographic variant called austenite, preferred at high temperatures, as well as different low-symmetry variants called martensites, favored at low temperatures. \MK 
These alloys, \UUU including \MK NiTi, CuAlNi, or InTh, are widely used in various technological applications, as discussed in \cite{Jani2014ARO}. 
The mixing of these different phases lead to the formation of complex microstructures, \UUU which ultimately govern the rich thermomechanical response of the material. \MK

In the continuum theory, the \UUU total stored energy of the system  usually consists of \MK 
\UUU bulk and interfacial energy contributions. \MK 
Neglecting the interfacial energy generically leads to a minimization problem that has no solution due to the formation of spatially finer and finer oscillations of the deformation gradient among the various phases. \UUU If \MK  spatial  phase changes are penalized by  the  interfacial energy, an optimal material layout is reached by \UUU balancing \MK energy contributions rising from the bulk \UUU and \MK the interface, \UUU under the effect of \MK external loading.

\UUU Different models \MK have been considered \UUU taking \FFF into account \MK interfacial energy \UUU in various forms.  \MK This includes strain gradients  \cite{BallMora-Corral-2009,Toup62EMCS} but also  gradients of nonlinear null \UUU Lagrangians \MK of the deformation \cite{BeKrSc17NLMGP}.  \MAK Curved interfaces in solids are thoroughly studied in \cite{gurtin1998general} following previous research on interface-bulk elastic interactions \cite{gurtin1975continuum}.  Curved twin boundaries in lead orthovanadate are observed in \cite{manolikas1986local}, see also \cite{cahn1982transitions} for discussions on grain boundary shapes or \cite{gao2017curvature} for curvature-dependent interfacial energies in nanomaterials. \EEE  \UUU Recently \MK
\v{S}ilhav\'{y}  introduced  \UUU in \cite{Silhavy-2011} \MK a notion of interface polyconvexity and  proved \UUU it \MK sufficient to ensure the existence of minimizers for the corresponding static problem. \UUU In particular, it his model the 
\MK perimeters of interfaces in the reference and deformed configurations, \UUU as well as \MK the deformations of lines in the referential interfaces \UUU are penalized. \MK A more explicit characterization of interface polyconvexity can be found in \cite{GKMS19,grandi2020}, \UUU discussing the case of materials with more than two phases as well. \MK 
\UUU Again, let us mention \MK that the mathematical treatment of multiphase materials without surface-energy \UUU penalization \MK  typically leads to ill-posed problems where the existence of a solution is not necessarily guaranteed, and some relaxation 
 is needed, cf.~\cite{Daco89DMCV}. This, however, would challenge orientation preservation of the involved deformations, and consequently, also injectivity may be lost \cite{Ball_puzzles}.  

\FFF In this article, we consider a material with two phases, separated by a sharp interface. \UUU Note however that our model can \MK be extended to describe more phases   similarly as in \cite{Silhavy-2010}, \UUU see Remark \ref{rem:multi}. \FFF
To incorporate the penalization of large \UUU interface  curvatures, \FFF we describe the interface in terms of a curvature varifold, a measure-theoretic generalization of classical surfaces with a notion of curvature and \UUU with \FFF good compactness properties \cite{Hutchinson:86,Mantegazza:96,Simon:83}. \MAK Mathematical models involving varifolds \EEE have been used to describe bending-resistant interfaces in a wide range of applications, for instance in the modeling of cracks \cite{MR2658342,MR2644754,KrMaMu:22}, biological membranes \cite{EichmannAGAG,BLS:20,RS:23}, or anisotropic phase transitions \cite{Moser:12}. \MK 

\FFF The state of the elastic body is characterized by the deformation $y$ \KB of the reference configuration $\Omega\subset\RR^3$, \FFF the phase field $\phi$, and a curvature varifold $V$ describing the phase interface. The equilibrium \KB state \FFF minimizes the energy $E$, consisting of the elastic bulk energy and the energy of the phase interface. If the varifold $V$ and the phase interface correspond to a smoothly embedded surface $M \subset \RR^3$ with  second fundamental form $I\!I$, our energy typically looks as follows
\begin{align}
    E(y,\phi,V)=\int_\Omega\Big((1-\phi\circ y)W_0(\nabla y)+\phi\circ y\, W_1(\nabla y)\, \Big)\dint X \UUU + \Haus^2(M) + \int_M |I\!I|^p \,\dint \Haus^2
\end{align}
see Section \ref{subsec:energies} for \FFF the general 
\UUU definition and all necessary details. 
 \FFF

\FFF The main result of this work is \UUU the proof of \FFF 
the existence of minimizers with the phase field and the varifold defined in the deformed configuration, i.e., in the Eulerian setting. In order to find a good framework for the direct method in the Calculus of Variations, two important challenges need to be met: Firstly, a suitable coupling needs to be imposed to identify the varifold with the phase field, see Definition \ref{def:coupling}. Secondly, the Eulerian setting implies that both $\phi$ and $V$ are defined in the deformed configuration $y(\Omega)$, which itself is subject to minimization. Once compactness is achieved, the existence of minimizers follows from the closedness of the coupling condition together with the lower semicontinuity of the energy via the usual (poly-)convexity assumptions. \NNN Note that, if no curvature term was present in the energy no varifold $V$ would be needed and the existence of minimizers would follow from the theory in \cite{GKMS19}. \EEE 

\FFF Moreover, we adapt the variational theory to study a related problem in topology optimization, taking into account the curvature of the design material surface. We also provide a corresponding referential formulation which might be computationally more feasible.
\MK

This article is organized as follows. \KB In Section~\ref{sec:prelim}, \MK we introduce basic notions and notation   \KB on functions of bounded variation and varifolds. \MK Our model is presented in Section~\ref{sec:model} and  the existence of a solution is proved in Section~\ref{sec:existence}.  This allows us to settle a problem of topology optimization in the Eulerian coordinates and to establish the existence of an optimal topological design in Section~\ref{sec:topology}.

\EEE





 
\section{Notation and preliminaries}\label{sec:prelim}

\subsection{Piecewise constant functions of bounded variation}
Let $U\subset\RR^3$ be open.
By $BV(U)$ we denote the class of {\em functions of bounded variation} and by $SBV(U)$ the class of {\em special functions of bounded variation}, see \cite{AmFuPa:00}.
We set
$$
SBV(U;\{0,1\}):=\{\phi\in SBV(U):\:\phi(x)\in\{0,1\}\:\text{for a.e.}\: x\in U\}.
$$
Its elements are {\em piecewise constant functions} in the sense of \cite[Def.\ 4.21]{AmFuPa:00}, restricted to only assuming values in $\{0,1\}$.
The weak derivative of $\phi\in SBV(U;\{0,1\})$ is the $\RR^3$-valued Radon measure
$$
D\phi=\nu_\phi(\Haus^{2}\llc J_\phi)\in\Radon(U;\RR^3).
$$
Here, $\Haus^2$ is the two-dimensional Hausdorff measure, $J_\phi\subset U$ is the approximate jump set, which is countably $\Haus^2$-rectifiable, and $\nu_\phi\colon J_\phi\to\SS^2$ is the unit normal vector. The total variation norm of $D\phi$ is given by
\begin{align}
|D\phi|(U)=\Haus^2(J_\phi).\label{eq:Dphi_H2}    
\end{align}
By definition, functions $\phi\in SBV(U;\{0,1\})$ are characteristic functions of some $E\subset U$ of finite perimeter, i.e., $\phi=\charac_E\colon U\to\RR$ with
$$
\charac_E(x):=\begin{cases}
    1, & x\in E\\
    0, & \text{otherwise.}
\end{cases}
$$ 
In particular, $J_\phi$ coincides with the reduced boundary of $E$ up to a $\Haus^2$-null set, $\nu_\phi$ points in the interior of $E$, and  $|D\phi|(U)$ is the perimeter of $E$.  

For the convenience of the reader, we recall the compactness theorem for piecewise constant functions which follows from \cite[Thm.~3.23, Thm.~4.25]{AmFuPa:00}.
\begin{theorem}[Compactness of piecewise constant SBV-functions]\label{thm:SBVpwcpt} 
Let $U\subset\RR^3$ be an \linebreak open, bounded Lipschitz domain. Let $(\phi_n)_{n\in\NN}\subset SBV(U)$ be piecewise constant functions, satisfying
$$
\sup_{n\in\NN}\left(\|\phi_n\|_{L^\infty(U)}+\Haus^2(J_{\phi_n})\right)<\infty.
$$
Then there exists a piecewise constant function $\phi\in SBV(U)$ such that after passing to a subsequence, we have $\phi_n\to \phi$ in $L^1(U)$ \KB and \EEE $D\phi_n \wto^* D\phi$ in $\Radon(U;\RR^3)$ as $n\to\infty$.
\end{theorem}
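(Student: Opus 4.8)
The plan is to deduce the statement from the standard compactness theorem in $BV$ \cite[Thm.~3.23]{AmFuPa:00}. The only additional work is to produce a uniform $BV$ bound from the stated hypotheses and to verify that the $L^1$-limit is again piecewise constant.

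\textbf{Step 1 (uniform $BV$ bound).} Since each $\phi_n$ is piecewise constant, its weak derivative is concentrated on the jump set, $D\phi_n=(\phi_n^+-\phi_n^-)\,\nu_{\phi_n}\,(\Haus^2\llc J_{\phi_n})$, with no diffuse part. The uniform $L^\infty$ bound controls the jump heights, $|\phi_n^+-\phi_n^-|\le 2\|\phi_n\|_{L^\infty(U)}$, whence
\begin{align}
|D\phi_n|(U)=\int_{J_{\phi_n}}|\phi_n^+-\phi_n^-|\,\dint\Haus^2\le 2\|\phi_n\|_{L^\infty(U)}\,\Haus^2(J_{\phi_n}).
\end{align}
Combined with $\|\phi_n\|_{L^1(U)}\le\Leb^3(U)\,\|\phi_n\|_{L^\infty(U)}$, valid since $U$ is bounded, the two hypotheses yield $\sup_{n\in\NN}\|\phi_n\|_{BV(U)}<\infty$.

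\textbf{Step 2 (extraction of a limit).} By \cite[Thm.~3.23]{AmFuPa:00}, after passing to a subsequence there exists $\phi\in BV(U)$ with $\phi_n\to\phi$ in $L^1(U)$ and $D\phi_n\wto^* D\phi$ in $\Radon(U;\RR^3)$. Passing to a further subsequence, we may also assume $\phi_n\to\phi$ pointwise $\Leb^3$-almost everywhere.

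\textbf{Step 3 (the limit is piecewise constant).} This is the genuine point. Each $\phi_n$ is associated with a Caccioppoli partition of $U$ whose total perimeter is controlled by $\Haus^2(J_{\phi_n})$, hence uniformly bounded. The closure theorem for piecewise constant functions \cite[Thm.~4.25]{AmFuPa:00} then guarantees that the $L^1$-limit $\phi$ of the $\phi_n$, having uniformly bounded jump measure, is itself piecewise constant, so in particular $\phi\in SBV(U)$. In the case most relevant for our application, where $\phi_n\in SBV(U;\{0,1\})$, this step simplifies: the $\Leb^3$-a.e.\ limit of $\{0,1\}$-valued functions is again $\{0,1\}$-valued, so $\phi=\charac_E$ for some $E\subset U$, and the uniform perimeter bound together with the lower semicontinuity of the perimeter under $L^1$-convergence shows that $E$ has finite perimeter, i.e.\ $\phi\in SBV(U;\{0,1\})$.

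The only real obstacle is Step 3: a priori, the $L^1$-limit of piecewise constant functions is merely $BV$ and could develop a Cantor or absolutely continuous part. Excluding this is exactly the content of the closure/compactness theory for Caccioppoli partitions, which relies essentially on the uniform bound on $\Haus^2(J_{\phi_n})$ established in Step 1.
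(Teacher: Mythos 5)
Your proposal is correct and follows exactly the route the paper takes: the paper proves this statement simply by citing \cite[Thm.~3.23, Thm.~4.25]{AmFuPa:00}, i.e.\ $BV$-compactness plus the closure/compactness theorem for piecewise constant functions, which are precisely the two ingredients of your Steps 2 and 3. Your Step 1 (deducing the uniform $BV$ bound from the $L^\infty$ and $\Haus^2(J_{\phi_n})$ bounds) and the $\{0,1\}$-valued simplification are the details the paper leaves implicit, and they are carried out correctly.
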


\subsection{Oriented curvature varifolds}\label{sec:ocv}
We briefly introduce the relevant definitions for varifolds, restricting to two-varifolds in the open set $U\subset \RR^3$. Let $G_{2,3}$ denote the Grassmannian, i.e., the set of all two-dimensional linear subspaces of $\RR^3$, which we describe by their orthogonal projection matrices $P\in\RR^{3\times 3}$. We identify the oriented Grassmannian with the two-sphere $\mathbb{S}^2$ by representing an oriented two-dimensional subspace by its unit normal.

Following \cite{Hutchinson:86}, an {\em oriented two-varifold} in $U$ is a (nonnegative) Radon measure 
$$
V\in\Radon(U\times\SS^2).
$$
The mass of $V$ is the Radon measure $\mu_V\in\Radon(U)$ given by
$$
\mu_V(B)=V(B\times\SS^2) \quad \text{for all Borel sets} \quad B\subset U.
$$
By Riesz' Representation Theorem, e.g., \cite{Simon:83}, $V$ is defined through its action on continuous functions with compact support, given by
$$
\langle V,u\rangle=\int_{U\times\SS^2}u(x,\nu)\dint V(x,\nu)\quad\text{for all}\quad u\in C^0_c(U\times\SS^2).
$$
Pushforward of $V$ by the covering map  $q\colon U\times \SS^2\to U\times G_{2,3}$, $q(x,\nu)=(x,\mathbb{I}_{3\times 3}-\nu\otimes\nu)$ gives the (unoriented) two-varifold $\qsharp V\in\Radon(U\times G_{2,3})$, namely,
$$
\langle \qsharp V,v\rangle=\int_{U\times G_{2,3}}v(x,P)\dint (\qsharp V)(x,P)=
\int_{U\times\SS^2}v(q(x,\nu))\dint V(x,\nu)
$$ 
for all  $v\in C^0_c(U\times G_{2,3})$. 
Moreover, to every oriented two-varifold $V$ we can associate a two-current $T_V\in\mathcal{D}_2(U)$ by 
$$
\langle T_V,\omega\rangle =\int_{U\times\SS^2}\langle\star\nu,\omega(x)\rangle\dint V(x,\nu)
$$
for all $\omega\in C_c^\infty(U;\Lambda^2(\RR^3))$, the smooth, compactly supported two-forms in $U$. Here, $\star\nu\in \Lambda_2(\RR^3)$ stands for the simple two-vector associated to $\nu\in\SS^2$ through the Hodge star operator $\star$. The boundary of $T_V$ is the one-current $\partial T_V\in\mathcal{D}_1(U)$, which is given by 
$$
\langle \partial T_V,\eta\rangle=\langle T_V,\dint\eta\rangle
$$ 
for all one-forms $\eta\in C_c^\infty(U;\Lambda^1(\RR^3))$, where $\dint\eta$ denotes the exterior derivative of $\eta$.

An {\em oriented integral two-varifold} is a varifold $V\in \Radon(U\times \SS^2)$ given by
$$
\langle V,u\rangle=\int_{M}\left(u(x,\nu^M(x))\theta^+(x)+u(x,-\nu^M(x))\theta^-(x)\right)\dint \Haus^2(x)
$$
for all $u\in C^0_c(U\times\SS^2)$ and for which we will also write
\begin{align}\label{eq:rec_or_varif}
V(x,\nu)=(\Haus^2\llc M)(x)\otimes\left(\theta^+(x)\delta_{\nu^M(x)}(\nu)+\theta^-(x)\delta_{-\nu^M(x)}(\nu)\right).
\end{align}
Here, $M\subset U$ is a countably $\Haus^2$-rectifiable set, the {\em orientation} $\nu^M\in L^1_{\loc,\Haus^2}(M;\SS^2)$ selects one of the two unit normals 
to the approximate tangent plane $T_xM$ at $\Haus^2$-a.e.\ $x\in M$, and the corresponding {\em multiplicities} $\theta^\pm\in L^1_{\loc,\Haus^2}(M)$ are integer-valued, i.e., $\theta^\pm(x)\in\NN$ for $\Haus^2$-a.e.\ $x\in M$.  The class of oriented integral two-varifolds in $U$ is denoted by $IV_2^o(U)$.

The unoriented varifold associated to $V$ is the {\em integral two-varifold} given by 
$$
\langle \qsharp V,v\rangle=\int_{M}v(x,T_xM)(\theta^+(x)+\theta^-(x))\dint \Haus^2(x)
$$
for all  $v\in C^0_c(U\times G_{2,3})$. The class of (unoriented) integral two-varifolds in $U$ is denoted by $IV_2(U)$.
The current associated to $V$ is the {\em integral two-current} given by
$$
\langle T_V,\omega\rangle=\int_{M}\langle \star\nu^M(x),\omega(x)\rangle(\theta^+(x)-\theta^-(x))\dint \Haus^2(x)
$$
for all  $\omega\in C_c^\infty(U;\Lambda^2(\RR^3))$.

A {\it curvature two-varifold} (in the sense of \cite{Hutchinson:86}, i.e., without boundary measure \cite{Mantegazza:96}), denoted by $V\in CV_2(U)$, is an integral varifold $V\in IV_2(U)$ such that there exist {\em generalized curvature function}
$A^V=(A^V_{ijk})_{i,j,k=1}^3\in L^1_{\loc,V}(U\times G_{2,3};\RR^{3 \times 3 \times 3})$
satisfying
$$
\int_{U\times G_{2,3}}\sum_{j=1}^3\Big(P_{ij}\partial_j\varphi+\sum_{k=1}^3(\partial_{P_{jk}}\varphi)\,A_{ijk}^V+A_{jij}^V\,\varphi\Big)\dint V=0
$$
for all $\varphi\in C_c^1(U\times G_{2,3})$ and $1\leq i\leq 3$. 
An {\it oriented curvature two-varifold}, denoted by $V\in CV^o_2(U)$, is an oriented integral varifold $V\in IV_2^o(U)$ whose unoriented counterpart $\qsharp V$ is a curvature varifold, i.e.,
$$
CV_2^o(U)=\{V\in IV_2^o(U):\:\qsharp V\in CV_2(U)\}.
$$
We conclude with a compactness theorem for oriented curvature varifolds without boundary, still restricting to two-varifolds in $U\subset \RR^3$. The result is a combination of the compactness theorem \cite[Thm.\ 3.1]{Hutchinson:86} for oriented integral varifolds and \cite[Thm.\ 6.1]{Mantegazza:96} for curvature varifolds.
\begin{theorem}[Compactness of oriented curvature varifolds without boundary]\label{thm:CVocpt}\phantom{e}\linebreak
Let $p>1$ and $(V_n)_{n\in\NN}\subset CV_2^o(U)$ satisfy $\partial T_{V_n}=0$ for all $n\in\NN$ and
$$
\sup_{n\in\NN}\left(\mu_{V_n}(U)+\|A^{\qsharp V_n}\|^p_{L_{\qsharp V_n}^p(U\times G_{2,3})}\right)<\infty.
$$
Then there exists  $V\in CV_2^o(U)$ such that after passing to a subsequence, $V_{n}\wto^\ast V$ in $\Radon(U\times \SS^2)$ and $\qsharp V_n \wto^\ast \qsharp V$, $A^{\qsharp V_n}_{ijk}\qsharp V_n \wto^\ast A^{\qsharp V}_{ijk}\qsharp  V$ in $\Radon(U\times G_{2,3})$ for all $1\leq i,j,k\leq 3$.

\end{theorem}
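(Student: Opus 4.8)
The plan is to deduce the three convergences from the two cited compactness theorems and then to verify that the limiting object is again an oriented curvature varifold. First I would record that the total mass of each $V_n$, viewed as a measure on $U\times\SS^2$, equals $V_n(U\times\SS^2)=\mu_{V_n}(U)$, which is uniformly bounded by hypothesis. Weak-* compactness of finite nonnegative Radon measures then yields a (not relabeled) subsequence with $V_n\wto^\ast V$ in $\Radon(U\times\SS^2)$ for some nonnegative Radon measure $V$. This is already the first asserted convergence; what remains is to identify $V$ as an element of $CV_2^o(U)$ and to establish the two convergences for the unoriented pushforwards.

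For the unoriented part I would apply the compactness theorem for curvature varifolds \cite[Thm.~6.1]{Mantegazza:96} to the sequence $\qsharp V_n$. Since the covering map $q$ acts only on the fibre, pushforward preserves mass, so $\mu_{\qsharp V_n}(U)=\mu_{V_n}(U)$ is uniformly bounded, while the curvature bound $\sup_n\|A^{\qsharp V_n}\|^p_{L^p_{\qsharp V_n}(U\times G_{2,3})}<\infty$ is exactly the hypothesis. This produces a further subsequence and a limit $W\in CV_2(U)$ with $\qsharp V_n\wto^\ast W$ and the curvature convergence $A^{\qsharp V_n}_{ijk}\qsharp V_n\wto^\ast A^W_{ijk}W$ in $\Radon(U\times G_{2,3})$. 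To tie this to the oriented limit $V$, I would use that pushforward is weak-* sequentially continuous: for $v\in C^0_c(U\times G_{2,3})$ one has $v\circ q\in C^0_c(U\times\SS^2)$ and $\langle\qsharp V_n,v\rangle=\langle V_n,v\circ q\rangle\to\langle V,v\circ q\rangle=\langle\qsharp V,v\rangle$, so $\qsharp V_n\wto^\ast\qsharp V$. Uniqueness of weak-* limits then gives $\qsharp V=W\in CV_2(U)$, which is precisely the second and third asserted convergences.

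It remains to prove that $V$ is itself an oriented integral varifold, $V\in IV_2^o(U)$; here I would invoke the oriented compactness theorem \cite[Thm.~3.1]{Hutchinson:86} together with the no-boundary hypothesis. The associated integral currents $T_{V_n}$ have mass bounded by $\mu_{V_n}(U)$, since pointwise $|\theta^+_n-\theta^-_n|\le\theta^+_n+\theta^-_n$ controls the current integrand by the varifold mass, and they satisfy $\partial T_{V_n}=0$. The Federer--Fleming compactness theorem for integral currents thus yields a subsequence with $T_{V_n}\to T$ for an integral current $T$ with $\partial T=0$, which I would identify as $T=T_V$ by testing against two-forms and passing to the limit via $V_n\wto^\ast V$. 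Hutchinson's structure result then matches the three integral objects---the unoriented mass $W=\qsharp V$, the current $T=T_V$, and the orientation densities $\theta^\pm$---to conclude that $V$ is an oriented integral varifold with integer multiplicities. Combined with $\qsharp V\in CV_2(U)$ from the previous step, this gives $V\in CV_2^o(U)$ by definition, completing the proof.

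The hard part will be this last step: a priori the weak-* limit $V$ is merely a nonnegative measure on $U\times\SS^2$, and one must rule out that the two orientation sheets partially cancel or that the limiting multiplicities fail to be integers. Controlling this degeneration is exactly the role of the condition $\partial T_{V_n}=0$, which keeps the net orientation rigid enough for Federer--Fleming compactness to apply to the currents $T_{V_n}$; the technical core is then reconciling the integral structure of the limit current $T_V$ with that of the unoriented limit $\qsharp V$ supplied by \cite[Thm.~6.1]{Mantegazza:96}, so that a consistent pair of integer multiplicities $\theta^\pm$ exists and $V$ is genuinely an element of $CV_2^o(U)$.
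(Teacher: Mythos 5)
Your overall architecture---weak-$\ast$ measure compactness for the oriented sequence, \cite[Thm.~6.1]{Mantegazza:96} applied to the pushforwards $\qsharp V_n$, and the identification $\qsharp V=W$ via the continuity of $q_\sharp$ under weak-$\ast$ convergence (since $v\circ q\in C^0_c(U\times\SS^2)$ for $v\in C^0_c(U\times G_{2,3})$)---is exactly the combination the paper intends, and those steps are correct, as are your observations that the current masses satisfy $\mathbf{M}(T_{V_n})\le\mu_{V_n}(U)$ and that any limit current is identified by testing with the continuous integrand $\langle\star\nu,\omega(x)\rangle$.

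The gap is in your last step, and you have in fact located it yourself without closing it. Knowing that $\qsharp V\in CV_2(U)\subset IV_2(U)$ and that $T_V$ is an integral current with $\partial T_V=0$ does \emph{not} imply $V\in IV_2^o(U)$: the measure $V=(\Haus^2\llc M)\otimes\tfrac12\big(\delta_{\nu^M}+\delta_{-\nu^M}\big)$ over a smooth closed surface $M$ has unoriented pushforward of multiplicity one (integral, even with curvature in every $L^p$) and associated current $T_V=0$ (integral, boundaryless), yet $\theta^\pm=\tfrac12$ are not integers. So the limiting triple $(V,\qsharp V,T_V)$ is internally consistent in your "matching" sense while failing the conclusion; no structure result applied to the limit objects alone can rule this out. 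Such splitting genuinely occurs for limits of oriented integral varifolds whose orientation oscillates on fine strips---it is excluded here only because oscillating orientations force $\mathbf{M}(\partial T_{V_n})\to\infty$, i.e., the hypothesis $\partial T_{V_n}=0$ must be used \emph{along the sequence}, not merely through its consequence $\partial T_V=0$ in the limit. This is precisely the content of \cite[Thm.~3.1]{Hutchinson:86}, which you should invoke as a black box rather than re-derive from Federer--Fleming plus matching. To invoke it legitimately you must verify its remaining hypothesis, which you never check: uniform (local) bounds on the first variations $\|\delta(\qsharp V_n)\|$. These do follow from your assumptions, because the generalized mean curvature of a curvature varifold is the trace $H_i=\sum_{j}A^{\qsharp V_n}_{jij}$, so that by H\"older's inequality
\begin{align}
\|\delta(\qsharp V_n)\|(U)\le C\int_{U\times G_{2,3}}|A^{\qsharp V_n}|\,\dint(\qsharp V_n)
\le C\,\mu_{V_n}(U)^{1-1/p}\Big(\int_{U\times G_{2,3}}|A^{\qsharp V_n}|^p\,\dint(\qsharp V_n)\Big)^{1/p},
\end{align}
which is uniformly bounded. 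With this, \cite[Thm.~3.1]{Hutchinson:86} yields, on a further subsequence, convergence to some limit in $IV_2^o(U)$, which coincides with your $V$ by uniqueness of weak-$\ast$ limits; combined with your correct identification $\qsharp V=W\in CV_2(U)$, the definition $CV_2^o(U)=\{V\in IV_2^o(U):\qsharp V\in CV_2(U)\}$ then finishes the proof.
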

Here, the $p$-th power of the $L^p$-norm of the curvature function of $V\in CV_2^o(U)$ is 
$$
\|A^{\qsharp V}\|^p_{L_{\qsharp V}^p(U\times G_{2,3})}=\int_{U\times G_{2,3}} |A^{\qsharp V}(x,P)|^p\,\dint(\qsharp V)(x,P)
$$
with Frobenius norm $|A^{\qsharp V}|=\sqrt{\sum_{i,j,k=1}^3(A^{\qsharp V}_{ijk})^2}$.
\KB In particular, if $V\in CV_2^o(U)$ corresponds to a smoothly embedded closed oriented surface $M\subset U$ and has multiplicities $\theta^++\theta^-\equiv 1$, cf.\ \eqref{eq:rec_or_varif}, then $\mu_V(U)=\int_M(\theta^++\theta^-)\dint\Haus^2=\Haus^2(M)$
and the Frobenius norms of the curvature function $A=A^{\qsharp V}$ of $M$ and of its second fundamental form $I\!I$ are related by $|A|^2=2|I\!I|^2$. 
\EEE 

\section{Model}\label{sec:model}
\subsection{States} 
Let $\Omega\subset\RR^3$ be an open, bounded Lipschitz domain, which describes the  reference configuration of an elastic body. 
The state of the body is characterized by the {\em deformation} $y$,
the {\em phase field} (or {\em phase indicator}) $\phi$, and the oriented curvature varifold
$V$ corresponding to the \emph{phase interfaces.}
The deformation is a homeomorphism
$$
y\colon\Omega\to y(\Omega)\subset\RR^3,
$$
mapping points in $X\in\Omega$ to points $x=y(X)\in y(\Omega)$.
We describe the interfaces in the Eulerian setting, that is, both $\phi$ and $V$ are defined on the current configuration 
$y(\Omega)$ of the body, namely
$$
\phi\in SBV(y(\Omega);\{0,1\})\qquad\text{and}\qquad V\in CV_2^o(y(\Omega)).
$$
Since $y$ is a homeomorphism, $y(\Omega)\subset \RR^3$ is an open set.

A crucial ingredient of the model is that we introduce a coupling relating the phase $\phi$ and the varifold $V$ in the following sense.
    \begin{definition}[Coupling]\label{def:coupling}
        Let $U\subset \RR^3$ be open and consider the linear map 
    	$$Q\colon C^0_c(U;\RR^3)\to C^0_c(U\times\SS^2), \quad (QY)(x,\nu):=Y(x)\cdot\nu.$$ 
    	An oriented varifold $V\in\Radon(U\times\SS^2)$ and a phase field $\phi\in SBV(U;\{0,1\})$ are called \emph{coupled in $U$} if
    	$D\phi=Q'V$, i.e., if for all $Y\in C^0_c(U;\RR^3)$ we have
     \begin{align}\label{eq:coupling}
         \langle D\phi,Y\rangle =\int_{J_\phi} Y\cdot\nu_\phi\,\dint \mathcal{H}^2  = \int_{U\times \SS^2}Y(x)\cdot\nu\,\dint V(x,\nu) = \langle V, QY\rangle = \langle Q'V,Y\rangle.
     \end{align} 
    \end{definition}
    Here, $Q'$ denotes the Banach space adjoint and we use the dualities $C^0_c(U;\RR^3)'=\mathcal{M}(U;\RR^3)$ and $C^0_c(U\times \mathbb{S}^2)'=\mathcal{M}(U\times \mathbb{S}^2)$, respectively.
    
	\begin{definition}[Admissible set]\label{def:admissible} We say that a triple $(y,\phi, V)$
 is \emph{admissible}, in short, 
 $$
 (y,\phi,V)\in\adm,
 $$
 if the following conditions are satisfied.
	\begin{enumerate}[(i)]
	\item\label{item:adm_y} $y\in W^{1,r}(\Omega;\RR^3)$ is a homeomorphism, $r>3$;
	
	\item $\phi\in SBV(y(\Omega);\{0,1\})$;

	\item $V\in CV_2^o(y(\Omega))$ 
 and has no boundary,
 i.e., 
 $\partial T_V=0$; 
	
	\item\label{item:coupling} $V$ is coupled to $\phi$ in $y(\Omega)$ in the sense of Definition \ref{def:coupling}. 
	\end{enumerate}
	\end{definition}
	
Condition \eqref{item:adm_y} is enforced by assuming that $y\in W^{1,r}(\Omega;\RR^3)$ satisfies $\det\nabla y>0$ a.e. in $\Omega$,  that it fulfills the Ciarlet-Ne\v{c}as condition \cite{CiaNec87ISCN}, \FFF i.e., \EEE
\begin{equation}\label{ciarlet-necas}
    \int_\Omega\det\nabla y(x)\,\dint x\le \mathcal{L}^3(y(\Omega)),
\end{equation}
 and  that the distortion $|\nabla y|^3/\det\nabla y\in L^{r-1}(\Omega)$. Namely, nonnegativity of the Jacobian determinant together with \eqref{ciarlet-necas} makes $y$  injective almost everywhere in $\Omega$. Controlling the distortion in $L^{r-1}(\Omega)$ ensures that $y$ is an open map; cf.~\cite[Thm.~3.24, p.~43]{HenKos14LMFD}. This together with almost everywhere injectivity implies that $y$ is homeomorphic in $\Omega$.
 
    \begin{lemma}[Properties of admissible triples]\label{lem:A_properties} Let $(y,\phi,V)\in \mathcal{A}$. Then we have
    \begin{enumerate}[(i)]
        \item\label{item:A_prop_J_vs_spt_mu} $J_\phi\subset \spt \mu_V$;
        \item\label{item:C2_rectif} $J_\phi$ and $\spt \mu_V$ are \emph{countably $\Haus^2$-rectifiable of class $2$,} i.e., up to a set of $\mathcal{H}^2$-measure zero, they can be covered by a countable union of embedded, two-dimensional $C^2$-submanifolds of $\RR^3$.
    \end{enumerate}
    \end{lemma}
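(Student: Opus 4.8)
The plan is to prove the two assertions in turn: part~(i) follows directly from the coupling condition, while part~(ii) rests on the regularity theory for curvature varifolds applied to $\spt\mu_V$, from which the rectifiability of $J_\phi$ is inherited through the inclusion in part~(i).

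For part~(i) I would argue by contradiction. Assume $x_0\in J_\phi$ but $x_0\notin\spt\mu_V$. As $\spt\mu_V$ is closed, there is $\rho>0$ with $B_\rho(x_0)\cap\spt\mu_V=\emptyset$, so that $\mu_V(B_\rho(x_0))=V(B_\rho(x_0)\times\SS^2)=0$. For any $Y\in C^0_c(B_\rho(x_0);\RR^3)$ the integrand $(x,\nu)\mapsto Y(x)\cdot\nu$ of $\langle V,QY\rangle$ is supported in the $V$-null set $B_\rho(x_0)\times\SS^2$, whence the coupling identity \eqref{eq:coupling} yields $\langle D\phi,Y\rangle=\langle V,QY\rangle=0$. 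Thus $D\phi$ vanishes on the connected open ball $B_\rho(x_0)$, so $\phi$ is a.e.\ constant there and therefore has no approximate jump point in $B_\rho(x_0)$; in particular $x_0\notin J_\phi$, a contradiction. This proves the pointwise inclusion $J_\phi\subset\spt\mu_V$.

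For part~(ii) the central object is the unoriented companion $\qsharp V\in CV_2(y(\Omega))$, which is an integral curvature varifold carried by a countably $\Haus^2$-rectifiable set $M$, with $\mu_V=\mu_{\qsharp V}=(\theta^++\theta^-)\,\Haus^2\llc M$, cf.\ \eqref{eq:rec_or_varif}. Since $\qsharp V$ has generalized curvature in $L^1_{\loc}$ and locally finite mass, its first variation is representable by integration; the monotonicity formula and the associated lower density bounds then identify $\spt\mu_V$ with $M$ up to an $\Haus^2$-null set, so it suffices to prove $C^2$-rectifiability of $M$. This is exactly the regularity content of the theory of curvature varifolds \cite{Hutchinson:86,Mantegazza:96}, which guarantees that $M$ is second-order rectifiable: $\Haus^2$-almost every point of $M$ admits a neighborhood in which $M$ is the graph of a twice weakly differentiable function over its approximate tangent plane, and a Lusin-type approximation of such Sobolev functions by $C^2$ functions then covers $\Haus^2$-almost all of $M$ by countably many embedded two-dimensional $C^2$-submanifolds of $\RR^3$. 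Hence $\spt\mu_V$ is countably $\Haus^2$-rectifiable of class $2$.

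The statement for $J_\phi$ is then immediate: by part~(i), $J_\phi\subset\spt\mu_V$, so the same countable family of $C^2$-submanifolds that covers $\spt\mu_V$ up to an $\Haus^2$-null set also covers $J_\phi$ up to an $\Haus^2$-null set, whence $J_\phi$ is countably $\Haus^2$-rectifiable of class $2$ as well. I expect the genuine weight of the argument to sit entirely in part~(ii), namely in invoking the $C^2$-rectifiability (second-order rectifiability) of curvature varifolds and in the measure-theoretic step identifying the topological support $\spt\mu_V$ with the rectifiable carrier $M$ up to an $\Haus^2$-negligible set; the coupling argument for part~(i) and the subset argument for $J_\phi$ are comparatively routine.
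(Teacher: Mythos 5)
Your part (i) is correct and is essentially the paper's own argument: both exploit that the coupling \eqref{eq:coupling} forces $D\phi$ to vanish wherever $\mu_V$ does; the paper tests with a field vanishing on $\spt\mu_V$, while you localize at a point and argue by contradiction, which if anything makes the pointwise inclusion more transparent. The genuine gap is in part (ii), exactly where you say the whole weight of the proof sits. The second-order ($C^2$-) rectifiability you invoke is \emph{not} contained in \cite{Hutchinson:86} or \cite{Mantegazza:96}. Hutchinson's regularity theory applies only when the generalized second fundamental form lies in $L^q$ with $q$ strictly larger than the dimension (here $q>2$), and even then it yields local $C^{1,\alpha}$ multi-graph structure, not class-$2$ rectifiability; Mantegazza's paper concerns curvature varifolds with boundary and contains no second-order rectifiability theorem. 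In the setting of the lemma one only knows $A^{\qsharp V}\in L^1_{\loc,\qsharp V}$, and the result that actually closes the argument is Menne's theorem \cite[Thm.~1]{Menne}: every integral varifold with locally bounded first variation is countably rectifiable of class $2$. This is precisely the paper's route: $A^{\qsharp V}\in L^1_{\loc,\qsharp V}$ implies that $\mu_V$ has locally bounded first variation with generalized mean curvature $H_i=\sum_j A^{\qsharp V}_{jij}$ in $L^1_{\loc,\mu_V}$, and then \cite{Menne} applies. Menne's theorem is a deep stand-alone result, published long after and logically independent of the compactness and structure theory of curvature varifolds, so attributing it to that theory leaves the decisive step unproven.

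A second, related problem: your intermediate claim that ``the monotonicity formula and the associated lower density bounds identify $\spt\mu_V$ with $M$ up to an $\Haus^2$-null set'' is not available at this level of integrability. Monotonicity-based density lower bounds at every point of the support require generalized mean curvature in $L^q$ with $q>2$ (or locally bounded); with $H$ merely in $L^1_{\loc,\mu_V}$ no such bound holds, so this step would fail as stated. Note also that the covering conclusion of \cite[Thm.~1]{Menne} is formulated for $\mu_V$-almost every point, i.e.\ for the rectifiable carrier $M$ rather than for the topological support. The clean way to obtain the claim for $J_\phi$ is therefore not through $\spt\mu_V$ at all: the coupling gives $|D\phi|=\Haus^2\llc J_\phi\leq\mu_V$ as measures, so $\Haus^2$-almost all of $J_\phi$ lies in $M$ (where the multiplicity is at least one), and the countable family of $C^2$-submanifolds covering $\mu_V$-a.e.\ point then covers $J_\phi$ up to an $\Haus^2$-null set.
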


    \begin{proof} 
        If we take a test vector field $Y\in C^0_c(y(\Omega);\RR^3)$ with $Y\equiv 0$ on $\spt\mu_V$, the coupling \eqref{eq:coupling} implies
        \begin{align}
            \int_{J_\phi} Y\cdot\nu_\phi\,\dint\mathcal{H}^2 
            = \int_{y(\Omega)\times \SS^2}\underbrace{Y(x)\cdot\nu}_{\leq |Y(x)|}\dint V(x,\nu)\leq \int_{y(\Omega)} |Y|\dint \mu_V =0,
         \end{align}
         and \eqref{item:A_prop_J_vs_spt_mu} follows. For \eqref{item:C2_rectif}, note that $A^{\qsharp V}\in L^1_{\mathrm{loc},\qsharp V}(y(\Omega)\times G_{2,3})$ implies that $\mu_V$ has locally bounded first variation with generalized mean curvature in $L^1_{\mathrm{loc},\mu_V}(y(\Omega))$, and consequently the statement follows from \cite[Theorem 1]{Menne}.
    \end{proof}

	\begin{remark}
		The coupling \eqref{item:coupling}, i.e., $D\phi=Q'V$ in $y(\Omega)$, does not imply that the multiplicity of $V$ must be one, in particular, it does not imply
  $V(x,\nu)=(\Haus^2\llc J_\phi)(x)\otimes\delta_{\nu_\phi(x)}(\nu)$, \KB cf.\ \eqref{eq:rec_or_varif}. \EEE Moreover, there may in general be multiple different varifolds coupled with a fixed phase $\phi$.
		The reason for this is that $Q$ is not surjective: 
		If $u\in C^0_c(y(\Omega)\times\SS^2)$ satisfies $u(x_0,\pm \nu)=1$ for some $x_0\in y(\Omega), \nu\in \mathbb{S}^2$, there is no representation $u=QY$ for $Y\in C^0_c(y(\Omega);\RR^3)$.
	\end{remark}

\subsection{Energies}\label{subsec:energies}
Equilibrium configurations of the body are admissible states $(y,\phi,V)\in\adm$ that minimize the {\em energy}
	\begin{equation}\label{eq:E}
		E(y,\phi,V):=\Ebulk(y,\phi)+\Einterface(y,V),
	\end{equation}
which consists of the {\em bulk energy}
$$
\Ebulk(y,\phi):=\int_\Omega\Big((1-(\phi\circ y)(X))\,W_0(\nabla y(X))+(\phi\circ y)(X)\,W_1(\nabla y(X))\Big)\dint X
$$
and the {\em interface energy}
\begin{align}
    \Einterface(y,V):=\int_{y(\Omega)\times G_{2,3}} \Psi(A^{\qsharp V}(x,P))\dint(\qsharp V)(x,P).
\end{align}


	Here,
$W_i\colon\RR^{3\times 3}\to \RR$ is the \emph{elastic energy density} of phase $i$ ($i=0,1$), and $\Psi\colon \RR^{3\times 3\times 3}\to \RR$ is the \emph{interface energy density.} We assume that there exist $c_{\textrm{bulk}}>0$ 
and $s>0$ such that
        \begin{align}\label{eq:W_coercive}
            W_i(F) \begin{cases}\geq c_{\textrm{bulk}} \left(|F|^{r} + \left(\displaystyle\frac{|F|^3}{\det F}\right)^{r-1} +(\det F)^{-s}\right) & \text{ if } \det F>0\\[2mm]
            =+\infty  &\text{ if } \det F\le 0,
            \end{cases}
        \end{align}
\KB where $r>3$ is the same as in Definition \ref{def:admissible}. \EEE         
          Moreover, we assume that $W_i$ is polyconvex \cite{Ball:77}, i.e., that there exists a convex function $h_i:\RR^{19}\to\RR$ such that $W_i(F)=h_i(F,
        \Cof F,\det F)$ for all $F\in\RR^{3\times 3}$ with $\det F>0$ and $i=0,1$. It is easily seen that $F\mapsto (|F|^3/\det F)^{r-1}$  is polyconvex in the set of matrices with positive determinants if $r>3$. Therefore the right-hand side of \eqref{eq:W_coercive} can serve as an example of a polyconvex stored energy density.

In the interfacial energy, we assume that $\Psi\colon \RR^{3\times 3\times 3}\to \RR$ is a convex function satisfying
\begin{align}\label{eq:Psi_coercivity}
    \Psi(A)  \geq c_{\mathrm{int}}(1+ |A|^p)\quad \text{ for all } A\in \RR^{3\times 3\times 3}
\end{align}
for some $p>1$ and $c_{\mathrm{int}}>0$. Integrating \eqref{eq:Psi_coercivity}, we find that the interface energy controls both the curvature and the mass of the varifold, since
\begin{align}\label{eq:Psi_control}
    \Einterface(y,V) \geq c_\mathrm{int}\Big(\mu_V(y(\Omega))+\int_{y(\Omega)\times G_{2,3}}|A^{\qsharp V}|^p \dint (\qsharp V)\Big).
\end{align}

\section{Existence of equilibrium states}\label{sec:existence}
	
	Let $E$ be given by \eqref{eq:E} and $\adm$ as in Definition \ref{def:admissible}. Then, 
	the following result holds.
	\begin{theorem}[Existence]\label{thm:main} There exists a minimizer of $E$ on $\adm$.
	\end{theorem}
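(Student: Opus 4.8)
The plan is to use the direct method of the Calculus of Variations. Let $(y_n,\phi_n,V_n)\in\adm$ be a minimizing sequence for $E$. The main structural difficulty is that the phase field $\phi_n$ and varifold $V_n$ live on the moving domain $y_n(\Omega)$, which is itself varying along the sequence; to apply the compactness theorems \ref{thm:SBVpwcpt} and \ref{thm:CVocpt}, which are stated on a fixed open set $U$, I would first need to trap all the deformed configurations inside one common ambient set. To this end I would extract, from the coercivity \eqref{eq:W_coercive}, a uniform bound on $\|y_n\|_{W^{1,r}(\Omega;\RR^3)}$ (controlling $|\nabla y_n|^r$) and on the distortion in $L^{r-1}$, together with a uniform lower bound on $\det\nabla y_n$ via the $(\det F)^{-s}$ term. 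Since $r>3$, Morrey embedding gives $y_n\to y$ in $C^0(\overline\Omega;\RR^3)$ for a (not relabeled) subsequence, and in particular the images $y_n(\Omega)$ are eventually contained in a fixed bounded open set $U\supset\overline{y(\Omega)}$. I would set up all varifold/$BV$ compactness on this fixed $U$, extending $\phi_n$ by $0$ and viewing $V_n$ as elements of $\Radon(U\times\SS^2)$.

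Next I would run the two compactness results on $U$. From \eqref{eq:Psi_control} the interface energy bound yields $\sup_n(\mu_{V_n}(U)+\|A^{\qsharp V_n}\|_{L^p}^p)<\infty$, so Theorem \ref{thm:CVocpt} (using $\partial T_{V_n}=0$) produces a limit $V\in CV_2^o(U)$ with $V_n\wto^\ast V$, $\qsharp V_n\wto^\ast\qsharp V$ and the curvature-weighted convergence $A^{\qsharp V_n}_{ijk}\qsharp V_n\wto^\ast A^{\qsharp V}_{ijk}\qsharp V$. The coupling $D\phi_n=Q'V_n$ then forces a uniform perimeter bound $\Haus^2(J_{\phi_n})=|D\phi_n|(U)\le\mu_{V_n}(U)$, so Theorem \ref{thm:SBVpwcpt} gives $\phi_n\to\phi$ in $L^1(U)$ with $D\phi_n\wto^\ast D\phi$, where $\phi\in SBV(U;\{0,1\})$. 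Because the test map $Q$ is continuous and fixed, passing to the limit in \eqref{eq:coupling} against every $Y\in C^0_c(U;\RR^3)$ preserves the coupling, giving $D\phi=Q'V$; this is the closedness of the coupling condition. I would then check that $\phi$ and $V$ are in fact supported in $\overline{y(\Omega)}$ (using $L^1$ convergence and the support of $\mu_V$) so that the limit triple is genuinely a state on $y(\Omega)$, and verify the Ciarlet–Ne\v cas condition \eqref{ciarlet-necas} together with $\det\nabla y>0$ and the distortion bound pass to the limit, so that $(y,\phi,V)\in\adm$.

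For lower semicontinuity I would treat the two energy contributions separately. The interface energy is handled by convexity of $\Psi$: writing $\Einterface(y_n,V_n)=\int\Psi(A^{\qsharp V_n})\dint(\qsharp V_n)$, the simultaneous weak-$\ast$ convergence of $\qsharp V_n$ and of the measures $A^{\qsharp V_n}_{ijk}\qsharp V_n$ is exactly the setting for Reshetnyak-type / convexity lower semicontinuity of integral functionals with linear growth in the curvature variable, yielding $\Einterface(y,V)\le\liminf_n\Einterface(y_n,V_n)$. The bulk energy uses polyconvexity of $W_i$ in the standard Ball framework: the bounds give weak convergence of the minors $(\nabla y_n,\Cof\nabla y_n,\det\nabla y_n)$ to $(\nabla y,\Cof\nabla y,\det\nabla y)$ in the appropriate $L^q$ spaces, and convexity of $h_i$ gives weak lower semicontinuity.

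The step I expect to be genuinely delicate is the bulk term, because the integrand contains the composition $(\phi_n\circ y_n)(X)$, mixing the (Eulerian) phase convergence with the deformation convergence. The plan here is to exploit $y_n\to y$ uniformly together with $\det\nabla y_n\to\det\nabla y$ and the nondegeneracy furnished by $(\det F)^{-s}$ to transfer the $L^1$-convergence of $\phi_n$ on $U$ into convergence of $\phi_n\circ y_n\to\phi\circ y$ in measure on $\Omega$ (a change-of-variables / area-formula argument, valid since the $y_n$ are homeomorphisms with controlled Jacobians). Once $\phi_n\circ y_n\to\phi\circ y$ in measure and the characteristic-function structure bounds them in $L^\infty$, one splits $\Ebulk$ into the two phase contributions and applies polyconvex lower semicontinuity to each, noting that multiplication by the convergent phase factors $(1-\phi_n\circ y_n)$ and $\phi_n\circ y_n$ is compatible with the weak convergence of the minors. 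Combining the two lower semicontinuity estimates with $(y,\phi,V)\in\adm$ shows that the limit triple attains the infimum, completing the proof.
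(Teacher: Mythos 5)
Your overall architecture (direct method, compactness for $\phi_n$ and $V_n$, closedness of the coupling, convexity/polyconvexity for lower semicontinuity, and the composition $\phi_n\circ y_n\to\phi\circ y$ as the delicate bulk step) matches the paper's proof. However, there is a genuine gap in your compactness setup: you propose to trap all deformed configurations in one large open set $U\supset\overline{y(\Omega)}$, extend $\phi_n$ by zero, view $V_n$ as measures on $U\times\SS^2$, and then apply Theorems \ref{thm:SBVpwcpt} and \ref{thm:CVocpt} on $U$. This step fails. Extending $\phi_n\in SBV(y_n(\Omega);\{0,1\})$ by zero creates a new jump along $\partial y_n(\Omega)$ wherever $\{\phi_n=1\}$ reaches the boundary, and the area of that boundary is not controlled by the energy bound (for a $W^{1,r}$-homeomorphism it can even be infinite). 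A clean example is the admissible competitor $(y_n,1,0)$: its zero-extension is $\charac_{y_n(\Omega)}$, whose jump set in $U$ is the essential boundary of $y_n(\Omega)$, so the extension need not even be of finite perimeter, and the hypothesis $\sup_n\Haus^2(J_{\phi_n})<\infty$ of Theorem \ref{thm:SBVpwcpt} on $U$ is unavailable. The same problem destroys the varifold structure: the zero-extension of $V_n$ is a finite Radon measure on $U\times\SS^2$, but it is in general \emph{not} a curvature varifold in $U$ and does \emph{not} satisfy $\partial T_{V_n}=0$ in $U$. Think of an interface which is a flat disk whose boundary circle lies on $\partial y_n(\Omega)$: inside $y_n(\Omega)$ it has zero curvature and no current boundary, but in $U$ its associated current has nonzero boundary, and the integration-by-parts identity defining $A^{\qsharp V_n}$ fails for test functions that do not vanish near $\partial y_n(\Omega)$. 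Hence Theorem \ref{thm:CVocpt} cannot be invoked on $U$; mere weak-$*$ compactness of measures would give neither an integral limit with curvature nor strong $L^1$ convergence of $\phi_n$ to a $\{0,1\}$-valued limit.

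The paper's device is the opposite, \emph{inner} localization, and this is the idea your proof is missing: choose open Lipschitz sets $U^\ell\subset\subset y(\Omega)$ exhausting the \emph{limit} domain; by uniform convergence $y_n\to y$, each $U^\ell$ is contained in $y_n(\Omega)$ for $n\geq n(\ell)$, and \emph{restriction} (unlike extension) of $\phi_n$ and $V_n$ to $U^\ell$ preserves the $SBV$ structure, the curvature-varifold property, the condition $\partial T=0$, the coupling, and the energy bounds; via Lemma \ref{lem:A_properties} and \eqref{eq:Dphi_H2} one gets $\Haus^2(J_{\phi_n})\leq \mu_{V_n}(y_n(\Omega))\leq K$ exactly as you intend. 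Compactness is then applied on each $U^\ell$, a diagonal argument assembles well-defined limits $\phi$, $V$ on $y(\Omega)$, and lower semicontinuity of the interface term is obtained on each $U^\ell$ and completed by monotone convergence in $\ell$. With this correction, your remaining steps (closedness of the coupling by testing with fixed $Y\in C^0_c(y(\Omega);\RR^3)$, Hutchinson-type semicontinuity for convex $\Psi$, the change-of-variables lemma giving $\phi_n\circ y_n\to\phi\circ y$, and Eisen/polyconvex lower semicontinuity for the bulk) go through essentially as in the paper. A minor further imprecision: the $(\det F)^{-s}$ term in \eqref{eq:W_coercive} yields only a uniform bound on $\int_\Omega(\det\nabla y_n)^{-s}\,\dint X$, not a pointwise uniform lower bound on $\det\nabla y_n$; what the composition argument actually uses is the uniform $L^{r-1}$ bound on the distortion.
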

	
	\begin{proof}
		We observe that $(y,\phi,V)=(\text{id},1,0)\in\adm$ and 
		$E(\text{id},1,0)=\int_\Omega W_1(\mathbb{I}_{3\times 3})\,\dint X<\infty$,
		which is a consequence of $W_1(\mathbb{I}_{3\times 3})<\infty$ and $|\Omega|<\infty$. In particular, $\inf_{\adm}E<\infty$.
		
		Let $(y_n,\phi_n,V_n)_{n\in\NN}\subset\adm$ be a minimizing sequence for $E$. Without loss of generality, we may assume $\int_\Omega y_n\,\dint X =0$ and
		\begin{align}
			E(y_n, \phi_n, V_n)\leq K\quad \text{ for all }n\in \NN.
		\end{align} 
        In particular, by \eqref{eq:W_coercive}, we have $\det \nabla y_n>0$ a.e.\ in $\Omega$ and $(y_n)_{n\in \NN}\subset W^{1,r}(\Omega;\RR^3)$ is bounded. Thus, after passing to a subsequence, we may assume $y_n\rightharpoonup y$ in $W^{1,r}(\Omega;\RR^3)$ and also 
		\begin{align}\label{eq:yn_unif}
			y_n\to y \quad \text{ in } C^0(\bar{\Omega};\RR^3) \text{ as }n\to\infty.
		\end{align}
  Moreover, the weak convergence of $(y_n)_{n\in\NN}$,  the sequential weak continuity of $y\mapsto\det\nabla y\colon W^{1,r}(\Omega;\RR^3)\to L^{r/3}(\Omega)$, and of  $y\mapsto\Cof\nabla y\colon W^{1,r}(\Omega;\RR^3)\to L^{r/2}(\Omega;\RR^{3\times 3})$ yield
  \begin{align}
  &\nabla y_n\rightharpoonup\nabla y  \quad \text{ in }L^r(\Omega;\RR^{3\times 3})  \text{ as }n\to\infty,\label{eq:nabla_conv}\\
			&\det \nabla y_n\rightharpoonup \det\nabla y \quad \text{ in } L^{r/3}(\Omega) \text{ as }n\to\infty,\label{eq:det_conv}\\
   &
			\Cof \nabla y_n\rightharpoonup \Cof\nabla y \quad \text{ in } L^{r/2}(\Omega;\RR^{3\times 3}) \text{ as }n\to\infty.\label{eq:cof_conv}
  \end{align}
      The convergence in \eqref{eq:yn_unif} allows us to pass  to the limit in the right-hand side of \eqref{ciarlet-necas} while the left-hand side passes to the limit due to the sequential weak continuity of $y\mapsto\det\nabla y$: $W^{1,r}(\Omega;\RR^3)\to L^{r/3}(\Omega)$.   
      Moreover, \eqref{eq:nabla_conv} and \eqref{eq:det_conv}, together with polyconvexity of $F\mapsto (|F|^3/\det F)^{r-1}$, imply that 
      $$\liminf_{n\to\infty}\int_\Omega \frac{|\nabla y_n|^{3(r-1)}}{(\det\nabla y_n)^{r-1}}\, {\rm d}X\ge \int_\Omega \frac{|\nabla y|^{3(r-1)}}{(\det\nabla y)^{r-1}}\, {\rm d}X, $$
      i.e., the distortion of the limit deformation also belongs to 
      $L^{r-1}(\Omega)$, in particular, it implies that 
      $y$ is also a homeomorphism.

Given a strictly decreasing zero-sequence $(\veps_\ell)_{\ell\in\NN}\subset (0,1)$,
let $(U^\ell)_{\ell\in\NN}$ denote a sequence of open, bounded Lipschitz domains such that
		\begin{align}
			 \{x\in y(\Omega):\: \mathrm{dist}(x, \partial y(\Omega))>\veps_\ell \} \subset U^\ell \subset\subset y(\Omega)
		\end{align}
and which, upon extracting a subsequence, is increasing, i.e., $U^{\ell_1}\subset U^{\ell_2}$ whenever $\ell_1<\ell_2$.
By \eqref{eq:yn_unif}, one easily shows that $U^\ell$ for $\ell\in\NN$ fixed is contained in the image set $y_n(\Omega)$, namely,
		\begin{align}\label{eq:Ueps_in_y_n}
			U^\ell \subset y_n(\Omega) \quad\text{whenever $n\geq n(\ell)\in\NN$ is large enough.}
		\end{align}
      
		{\bf Compactness:} First, we examine the sequence of varifolds. For $\ell\in\NN$ and $n\geq n(\ell)$, consider the restriction
		\begin{align}
			V_n^{\ell} := V_n \llc (U^{\ell}\times \SS^2).
		\end{align}
		By testing the definitions of curvature varifolds and current boundaries with test functions supported in $U^\ell\times \SS^2$ and $U^\ell$, one verifies that
		\begin{align}
			V_n^{\ell} &\in CV^{o}_2(U^{\ell}),\quad A^{\qsharp V_n^{\ell}} = A^{\qsharp V_n}\llc (U^{\ell}\times \SS^2), \quad \text{and}\quad \partial T_{V_n^\ell}=0 \text{ in }U^{\ell}.
		\end{align}
		The coercivity assumption \eqref{eq:Psi_coercivity} implies that
		\begin{align}
			c_{\textrm{int}}\Big(\mu_{V_n^\ell}(U^\ell)+\int_{U^{\ell}\times G_{2,3}}|A^{\qsharp V_n^{\ell}}|^p\,\mathrm{d}(\qsharp V_n^{\ell}) \Big)\leq \Einterface(y_n,V_n) \leq K.
		\end{align}
		After passing to a subsequence, it thus follows from Theorem \ref{thm:CVocpt}
  that there exist $V^{\ell}\in CV^{o}_2(U^{\ell})$ such that $V_n^{\ell}\wto^\ast V^{\ell}$ in $\mathcal{M}(U^{\ell}\times \SS^2)$ and $\qsharp V_n^\ell \wto^* \qsharp V^\ell$, $A_{ijk}^{\qsharp V_n^\ell} \qsharp V_n^\ell \wto^* A_{ijk}^{\qsharp V^\ell} \qsharp V^\ell$ in $\Radon(U^\ell \times G_{2,3})$ for $1\leq i,j,k\leq 3$. In particular, it follows $\partial T_{V^\ell}=0$ in $U^{\ell}$. 
  
		Similarly, we find that 
        \begin{align}
            \phi_n^{\ell}:= \phi_n\vert_{U^{\ell}}\in SBV(U^{\ell};\{0,1\}) \quad \text{ with }\quad D\phi_n^{\ell} = D\phi_n \llc U^{\ell}.
        \end{align}
       
By \eqref{eq:Dphi_H2} and since $J_{\phi_n^\ell}\subset \spt \mu_{V_n^\ell}$ by Lemma \ref{lem:A_properties}\eqref{item:A_prop_J_vs_spt_mu}, it follows that
		\begin{align}\label{eq:SBV_bounds_Ueps}
		\Haus^2(J_{\phi_n^\ell})=|D \phi_n^\ell|(U^\ell)\leq \mu_{V^{\ell}_n}(y_n(\Omega))\leq K.
		\end{align}  
Moreover, $\Vert \phi_n^{\ell}\Vert_{L^\infty(U^{\ell})}<\infty$ uniformly as well, because $\phi_n^{\ell}\in SBV(U^\ell;\{0,1\})$ and $U^\ell\subset y(\bar\Omega)$ is bounded.
Consequently, from Theorem \ref{thm:SBVpwcpt} 
    it follows that after passing to a subsequence, we have $\phi_n^{\ell}\to \phi^{\ell}$ in $L^1(U^{\ell})$ as $n\to\infty$ with also $\phi^{\ell}\in \{0,1\}$ a.e.\ and $D\phi_n^{\ell} \wto^\ast D\phi^{\ell}$ in $\mathcal{M}(U^{\ell};\RR^3)$.
		
     It is not difficult to see that the above limits are local, i.e., if $\ell<\ell'$, then we have
		\begin{align}
			V^{\ell} = V^{\ell'}\llc (U^{\ell}\times \SS^2),\quad \phi^{\ell}&= \phi^{\ell'}\vert_{U^{\ell}}. \label{eq:well_def}
		\end{align}		
     Now, \UUU choosing  an appropriate diagonal sequence, we thus may assume $V_n^\ell \wto^* V^\ell$ as $n\to\infty$ for all $\ell\in \NN$, and \EEE     
  we obtain a limit varifold $V\in CV^{o}_2(y(\Omega))$ by setting
		\begin{align}
			\langle V, u\rangle = \langle V^{\ell},u\rangle = \lim_{n\to\infty} \langle V_n^\ell, u\rangle 
		\end{align}
        for any $u\in C^0_c(y(\Omega)\times \SS^2)$ and any $\ell\in\NN$ such that $\spt u\subset U^{\ell}$.
		Similarly, we define $\phi \in L^1(y(\Omega);\{0,1\})$ by the condition
  $$
  \phi\vert_{U^{\ell}} = \phi^{\ell}
  $$
  for all $\ell\in\NN$.
  By \eqref{eq:well_def} above, $V$ and $\phi$ are well-defined. Moreover, we have that $\phi \in SBV(y(\Omega);\{0,1\})$ since by \eqref{eq:SBV_bounds_Ueps} and \eqref{eq:Dphi_H2} we have
        \begin{align}
            |D\phi| (y(\Omega)) \leq \sup_{\ell\in\NN}\liminf_{n\to\infty} |D \phi^\ell_n|(U^\ell)=\sup_{\ell\in\NN}\liminf_{n\to\infty} \Haus^2(J_{\phi_n^\ell})\leq K.
        \end{align}
		
		To see that $V$ is coupled to $\phi$, fix any $Y\in C^0_c(y(\Omega);\RR^3)$. Taking $\ell\in\NN$ sufficiently large, we have that $\spt Y\subset U^{\ell}$, and consequently
		\begin{align}
			\langle D\phi, Y\rangle &= \langle D\phi^{\ell}, Y\rangle = \lim_{n\to\infty}\langle D\phi_n^{\ell},Y\rangle = \lim_{n\to\infty}\langle D\phi_n, Y\rangle \\
			&= \lim_{n\to\infty} \langle V_n, QY\rangle  = \lim_{n\to\infty} \langle V_n^{\ell}, QY\rangle = \langle V^{\ell}, QY\rangle = \langle V, QY\rangle,
		\end{align}
		using that $V_n$ and $\phi_n$ are coupled in $y_n(\Omega)$.
		
		{\bf Lower semicontinuity:} 
		For the interface part of the energy, by convexity of $\Psi$ and \cite[Theorem 5.3.2]{Hutchinson:86}, for every $\ell\in\NN$ we have 
		\begin{align}\label{eq:lsc_varif_eps}
			\int_{U^{\ell}\times G_{2,3}} \Psi\big(A^{\qsharp V^{\ell}}\big)\mathrm{d}(\qsharp V^{\ell}) \leq \liminf_{n\to\infty}\int_{U^{\ell}\times G_{2,3}}\Psi\big(A^{\qsharp V_n^{\ell}}\big)\mathrm{d}(\qsharp V_n^{\ell}).
		\end{align}
         Sending $\ell\to\infty$, monotone convergence implies
		\begin{align}
			\Einterface(y,V)=\int_{y(\Omega)\times G_{2,3}}\Psi\big(A^{\qsharp V}\big)\mathrm{d} (\qsharp V) & \leq \liminf_{n\to\infty}\int_{y_n(\Omega)\times G_{2,3}}\Psi\big(A^{\qsharp V_n}\big)\mathrm{d}(\qsharp V_n)\\ 
   &=\liminf_{n\to\infty}\Einterface(y_n,V_n).
		\end{align}
		
		We now turn to the bulk term. From the above construction of $U^\ell$ it follows that
		\begin{align}
			&\phi_n \to \phi \quad\text{ in } L^1(U^\ell),
		\end{align} 
        for all $\ell\in\NN$. We now find that
        \begin{align}
            \int_{y_n(\Omega)\cap y(\Omega)} |\phi_n(x)-\phi(x)|\mathrm{d}x 
            &\leq \int_{U^\ell} |\phi_n(x)-\phi(x)| \mathrm{d}x + |y(\Omega)\setminus U^\ell|,
        \end{align}
        so that, by sending first $n\to\infty$ and then $\ell\to\infty$, by \eqref{eq:yn_unif} we conclude that 
        \begin{align}
            \lim_{n\to\infty}\Vert \phi_n-\phi\Vert_{L^1(y_n(\Omega)\cap y(\Omega))}= 0.
        \end{align}
        By the energy bound and the coercivity assumptions \eqref{eq:W_coercive}, we find that the $y_n$ have uniformly $L^{r-1}$-bounded distortion, $r-1>2$, and consequently the assumptions of \cite[Lemma 5.3]{GKMS19} are satisfied. This yields that 
        \begin{align}\label{eq:phi_circ_y}
            \phi_n \circ y_n  \to \phi \circ y \quad \text{ in }L^1(\Omega).
        \end{align}        
       \MK  The last limit passage, polyconvexity of the bulk energy densities, weak convergence of minors \eqref{eq:nabla_conv}, \eqref{eq:det_conv}, \eqref{eq:cof_conv}, and the lower semicontinuity result of Eisen \cite{Eisen} imply  that 
		\begin{align}
			\Ebulk(y,\phi)\leq \liminf_{n\to\infty} \Ebulk(y_n,\phi_n).
		\end{align}
		Consequently, a minimizer of $E$ in $\mathcal{A}$ exists. 
	\end{proof}
 \EEE
\begin{remark}[Multiple phases] \label{rem:multi}
 \MK  Although the model above deals only with two  phases, an extension to a general  multiphase material is possible in a similar way as in \cite{Baldo90}, or \cite{grandi2020,Silhavy-2011}. \UUU More precisely, one can describe the case of $m\in {\mathbb N}$ distinct phases by redefining 
 $$ 
 E(y,\phi,V) = \sum_{i=1}^m \left(\int_\Omega (\phi_i \circ y)W_i(\nabla y)\, {\rm d} X + c_i\,\Einterface(y,V_i)\right).
 $$
\UUU
Here, the phase descriptor $\phi=(\phi_1,\dots, \phi_m)$ takes values in $\{0,1\}^m$\KB , i.e., $\phi\colon y(\Omega)\to \{0,1\}^m$, \UUU and the components $\phi_i$ \KB $(i=1,\dots,m)$ \UUU describe the local proportion of the different phases. In particular, $\phi$ is
 constrained to the set of pure phases $\{\phi=(\phi_1,\cdots, \phi_m) \in \{0,1\}^m\: : \: \phi_1+\dots+\phi_m=1\}$. Correspondingly, the vector of varifolds $V=(V_1,\cdots,V_m)$ collects 
 $m$ varifolds, 
 \FFF such that $V_i$ is coupled to $\phi_i$ in $y(\Omega)$, $i=1,\dots,m$. \UUU The energy densities $W_i$ are all assumed to be coercive as in \eqref{eq:W_coercive} and the constants $c_i$ are assumed to be positive. Indeed, the latter positivity turns out to be necessary for lower semicontinuity, 
 \MK see \cite{AmbBra90a} for details.
   \EEE
\end{remark}

\section{Topology optimization}\label{sec:topology}

In this section, we build on the above theory and tackle a
problem in topology optimization \cite{Allaire,Bendsoe}. With respect to more classical settings, the novelty is twofold here. Firstly, in line with this note's general approach, the problem's description is fully Eulerian, which naturally corresponds to the large-deformation setting. Secondly, the curvature of
the material interface
is taken into account. The penalization of the
curvature of the boundary of the body, in addition to its surface area, fits well into applicative situations where sharp edges should
avoided. Indeed, in many mechanical applications sharp edges, especially reentrant edges, may be subject to strong stresses and are often the onset of plasticization and damage.

Given a deformation $y\colon \Omega\to \RR^3$ \KB of an open, bounded Lipschitz domain $\Omega\subset\RR^3$, \EEE we interpret  the \KB image \EEE set $y(\Omega) $
as an a priori unknown {\it design domain}. This has to be understood as a container, to be partially occupied by an elastic
solid, whose actual shape is the subject of the optimization procedure.
We reinterpret the phase indicator $\phi\colon y(\Omega)\to \{0,1\}$ as a descriptor of the optimal shape. More precisely, the deformed state of the body to be identified corresponds to the subset of $y(\Omega)$ where $\phi\KB\equiv\EEE 1$. On the contrary, the subset of $y(\Omega)$ where $\phi\KB\equiv\EEE 0$ is interpreted as the deformed state of a  very
compliant {\it Ersatz} material, which is still assumed to be elastic. As customary in topology optimization, in order to avoid trivial solutions, we prescribe the 
\MAK total mass by imposing 
\begin{equation}
    \label{eq:volume}
    \int_{\Omega}\phi\circ y\, {\rm d} X  = \eta\, {\mathcal L}^3(\Omega)
\end{equation} \EEE
for a fixed parameter $\eta\in(0,1)$.

As the deformation $y$ is a priori unknown, for mathematical convenience, the scalar field $\phi$ is defined from here on on the whole space $\RR^3$ without changing notation. We will refer to such a field $\phi:\RR^3 \to \{0,1\}$ as {\it Eulerian material distribution}
in the following. 

Given an Eulerian material distribution $\phi$, we start by
solving the equilibrium problem with some appropriate boundary conditions in the referential configuration. More precisely, we let
$\partial \Omega $ be decomposed into $\Gamma_{\rm D},\, \Gamma_{\rm
  N}\subset \partial \Omega$, which are assumed to
be open (in the topology
of $\partial \Omega$) with $\Gamma_{\rm D}\cap\Gamma_{\rm N}
=\emptyset$, $\overline{\Gamma}_{\rm D}\cup \overline{\Gamma}_{\rm N}
= \partial \Omega $ (closure taken in the topology of $\partial
\Omega$), and ${\mathcal H}^2(\Gamma_{\rm D})>0$. The body is assumed to be
 clamped on $\Gamma_{\rm D}$ and the set of {\em admissible
deformations} reads
$${\mathcal Y}=\{y \in W^{1,r}(\Omega;{\mathbb R}^3) \::\: \text{$y$ is a
  homeomorphism}, \ y=\text{id on} \ \Gamma_{\rm D}\}.$$
In addition, a traction $g \in
L^1(\Gamma_{\rm N}; {\mathbb R}^3)$ is
exerted at the boundary part $\Gamma_{\rm N}$ and the material is subjected to a force with given force density $f \in L^1(\Omega; {\mathbb R}^3)$. Force and traction could also be assumed to be formulated in Eulerian coordinates, as well. 

For all $\eta\in (0,1)$ fixed, the set of equilibria related with $\phi$ 
is defined as
\begin{align}
    {\mathcal Y}(\phi)&=\argmin \bigg\{
E_{\rm bulk}(y,\phi) - \int_\Omega \KB(\EEE\phi\circ y\KB)\EEE f\cdot y \,  {\rm d} X-
\int_{\Gamma_{\rm N}} g \cdot y \, {\rm d} \Haus^2 \KB : \EEE \\\
&\qquad\qquad \qquad \qquad \text{$y\in {\mathcal Y}$ is such that} \  \eqref{eq:volume} \ \text{holds}\bigg\}. \label{eq:Yphi_def}
\end{align}

By following the arguments from the proof of Theorem \ref{thm:main}, one readily checks that $\mathcal{Y}(\phi)$ is not empty, provided $\phi$ is such that the constraint \eqref{eq:volume} is satisfied by some $y\in \mathcal{Y}$. 
\MAK To this aim, we will assume that the identity is admissible in \eqref{eq:volume}, i.e., $\int_\Omega \phi\, \dint X = \eta \mathcal{L}^3(\Omega)$. 
For such $\phi$, \EEE even if nonempty, $\mathcal{Y}(\phi)$ may not be a singleton, for equilibrium deformations could be not unique.  
\EEE

Our goal is to minimize the {\it compliance} 
$$ C(y,\phi)  = \int_\Omega (\phi\circ y)f\cdot y
\, {\rm d} X + \int_{\Gamma_{\rm N}} g \cdot y \, {\rm d} \Haus^2$$
which is a measure of the elastic energy stored by the deformed piece at equilibrium. 
In order to describe the curvature of the Eulerian interface, we augment the description of the material by \KB an oriented \EEE curvature varifold, which we relate to $\phi$ as in Section \ref{sec:model}. Recalling $\mathcal{A}$ from Definition \ref{def:admissible}, the topology optimization problem reads 
\MAK
\begin{align}
     &\min\Big\{C(y,\phi) +  \Einterface(y,V):~\phi\in L^\infty(\RR^3), \Vert \phi\Vert_{\infty}\leq 1, \int_\Omega\phi\, \dint X = \eta \mathcal{L}^3(\Omega) \\
     &\qquad\qquad\qquad \qquad\qquad\qquad\; y\in \mathcal{Y}(\phi), (y,\phi\vert_{y(\Omega)},V)\in\mathcal{A}\Big\}.\label{eq:to2}
\end{align}
\EEE

The main result of this section is the following. 

\begin{theorem}[Topology optimization]\label{thm4}  Problem \eqref{eq:to2} admits a solution. 
\end{theorem}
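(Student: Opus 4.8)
The plan is to run the direct method along the lines of the proof of Theorem~\ref{thm:main}, the two genuinely new points being that coercivity must now be harvested from the equilibrium condition (the compliance alone is not sign-definite), and that the lower-level constraint $y\in\mathcal{Y}(\phi)$ has to be shown to pass to the limit.

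First, I would fix a minimizing sequence $(y_n,\phi_n,V_n)$, so that $C(y_n,\phi_n)+\Einterface(y_n,V_n)\le K$. Since $\int_\Omega\phi_n\,\dint X=\eta\mathcal{L}^3(\Omega)$, the identity is an admissible competitor for the inner problem defining $\mathcal{Y}(\phi_n)$; as $y_n$ minimizes $\Ebulk(\cdot,\phi_n)-C(\cdot,\phi_n)$ over that class, I obtain $\Ebulk(y_n,\phi_n)-C(y_n,\phi_n)\le\Ebulk(\mathrm{id},\phi_n)-C(\mathrm{id},\phi_n)$, whose right-hand side is bounded because $\phi_n\in[0,1]$, $W_i(\mathbb{I}_{3\times 3})<\infty$, and $f,g\in L^1$. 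Together with $\Einterface\ge0$ (which forces $C(y_n,\phi_n)\le K$) and $\Ebulk\ge0$, this yields a uniform bound on $\Ebulk(y_n,\phi_n)$, hence by \eqref{eq:W_coercive} and the clamping $y_n=\mathrm{id}$ on $\Gamma_{\rm D}$ a uniform $W^{1,r}$-bound on $(y_n)$. From here the compactness of Theorem~\ref{thm:main} applies verbatim: I extract $y_n\rightharpoonup y$ in $W^{1,r}$ (uniformly, with weakly converging minors and homeomorphic limit), a curvature varifold $V$, and, via Lemma~\ref{lem:A_properties} and the exhaustion $U^\ell$, a phase $\phi$ with $\phi|_{y(\Omega)}\in SBV(y(\Omega);\{0,1\})$ coupled to $V$, so that $(y,\phi|_{y(\Omega)},V)\in\adm$.

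Next I would pass the remaining constraints to the limit. Regarding $(\phi_n)$ as a bounded sequence in $L^\infty(\RR^3)$, I extract a weak-$*$ limit which coincides on $y(\Omega)$ with the strong $L^1_{\mathrm{loc}}$-limit above; testing against $\charac_\Omega$ preserves the upper-level constraint $\int_\Omega\phi\,\dint X=\eta\mathcal{L}^3(\Omega)$, while $\phi_n\circ y_n\to\phi\circ y$ in $L^1(\Omega)$ (by \cite[Lemma~5.3]{GKMS19}, as in \eqref{eq:phi_circ_y}) preserves the lower-level constraint $\int_\Omega\phi\circ y\,\dint X=\eta\mathcal{L}^3(\Omega)$ and shows $y\in\mathcal{Y}$. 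For lower semicontinuity of the objective, the interface term is treated exactly as in Theorem~\ref{thm:main}, whereas the compliance is in fact \emph{continuous}: $\int_{\Gamma_{\rm N}}g\cdot y_n\to\int_{\Gamma_{\rm N}}g\cdot y$ by uniform convergence and $g\in L^1$, and $(\phi_n\circ y_n)(f\cdot y_n)\to(\phi\circ y)(f\cdot y)$ in $L^1(\Omega)$ on combining $\phi_n\circ y_n\to\phi\circ y$ in $L^1$ with the uniform bound on $\|y_n\|_\infty$, the uniform convergence $y_n\to y$, and $f\in L^1$; hence $C(y_n,\phi_n)\to C(y,\phi)$.

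The hard part is to verify $y\in\mathcal{Y}(\phi)$, i.e.\ that $y$ minimizes $I(\cdot):=\Ebulk(\cdot,\phi)-C(\cdot,\phi)$ over $\mathcal{F}(\phi):=\{z\in\mathcal{Y}:\int_\Omega\phi\circ z\,\dint X=\eta\mathcal{L}^3(\Omega)\}$. The liminf inequality $I(y)\le\liminf_nI_n(y_n)$, with $I_n:=\Ebulk(\cdot,\phi_n)-C(\cdot,\phi_n)$, follows from bulk lower semicontinuity and compliance continuity as above. For minimality I would fix $z\in\mathcal{F}(\phi)$ and construct a recovery sequence $z_n\in\mathcal{F}(\phi_n)$ with $\limsup_nI_n(z_n)\le I(z)$, whence $I(y)\le\liminf_nI_n(y_n)\le\liminf_nI_n(z_n)\le I(z)$ because $y_n$ minimizes $I_n$. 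The naive choice $z_n=z$ already yields $I_n(z)\to I(z)$, since both $\Ebulk(z,\cdot)$ and $C(z,\cdot)$ are affine in $\phi_n\circ z$ with fixed $L^1$-weights $(W_1-W_0)(\nabla z)$ and $f\cdot z$, against which $\phi_n\circ z\wto^*\phi\circ z$; the only defect is that $z$ satisfies the $\phi_n$-volume constraint merely asymptotically, $\int_\Omega\phi_n\circ z\,\dint X\to\eta\mathcal{L}^3(\Omega)$. Correcting this vanishing volume mismatch is the main obstacle, all the more since $\phi_n$ converges only weak-$*$ off $y(\Omega)$ and the $W_i$ carry no upper growth bound. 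I would remove it by a near-identity inner reparametrization $z_n=z\circ(\mathrm{id}+t_n\xi)$ with $\xi\in C_c^\infty(\Omega;\RR^3)$ supported in a set on which $\nabla z$ is controlled and $\det\nabla z$ is bounded below, chosen so that $\int_\Omega(\phi\circ z)\,\mathrm{div}\,\xi\neq0$: the map $t\mapsto\int_\Omega\phi_n\circ(z\circ(\mathrm{id}+t\xi))\,\dint X$ is continuous with nonvanishing slope at $t=0$ for large $n$, so the intermediate value theorem selects $t_n\to0$ enforcing $z_n\in\mathcal{F}(\phi_n)$ exactly, while the localized support keeps $\limsup_nI_n(z_n)\le I(z)$. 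With $y\in\mathcal{Y}(\phi)$ established, lower semicontinuity of the objective concludes the proof.
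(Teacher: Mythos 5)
Your proposal tracks the paper's own proof through all of its structural steps: finiteness of the infimum, coercivity of the infimizing sequence by comparing $y_n$ with the identity map in the inner problem \eqref{eq:Yphi_def} (this is exactly the paper's argument, using that the outer constraint $\int_\Omega\phi_n\,\dint X=\eta\mathcal{L}^3(\Omega)$ makes $\mathrm{id}$ admissible), compactness and admissibility of the limit triple via the machinery of Theorem~\ref{thm:main}, lower semicontinuity of $\Ebulk$ and $\Einterface$, continuity of the compliance from \eqref{eq:c1} and \eqref{eq:c3}, and passage of both mass constraints. Where you depart from the paper is the verification that $y\in\mathcal{Y}(\phi)$. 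The paper deduces this in a single line from \eqref{eq:c1}, \eqref{eq:c3}, and \eqref{eq:c6}, i.e., it treats the stability of the argmin set as an immediate consequence of the established convergences and does not construct any recovery sequence. You instead identify exactly what that brisk deduction leaves implicit: a competitor $z$ for the limit problem satisfies $\int_\Omega\phi_n\circ z\,\dint X\to\eta\mathcal{L}^3(\Omega)$ but in general not the exact constraint, so the minimality of $y_n$ cannot be tested against $z$ directly, and the chain $J(y,\phi)\le\liminf_n J(y_n,\phi_n)\le\liminf_n J_n(z_n)\le J(z,\phi)$, with $J(\cdot,\phi):=\Ebulk(\cdot,\phi)-C(\cdot,\phi)$, needs a genuine recovery construction. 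Your diagnosis of where the difficulty sits is sound and goes beyond what the paper writes down.

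However, the fix you propose is not complete, and the gaps are precisely at its load-bearing points. The inner variation $z_n=z\circ(\mathrm{id}+t_n\xi)$ requires $\xi\in C_c^\infty(\Omega;\RR^3)$ supported in an \emph{open} set on which $\nabla z$ is essentially bounded and $\det\nabla z$ is bounded below, and on which moreover $\int_\Omega(\phi\circ z)\,\mathrm{div}\,\xi\neq 0$. Neither requirement is guaranteed. First, $z$ is merely a $W^{1,r}$-homeomorphism with integrable distortion: there need not exist \emph{any} open subset of $\Omega$ on which $\nabla z$ is essentially bounded and $\det\nabla z$ stays away from zero, and without such a localization the terms $\int_\Omega W_i\big(\nabla z(\cdot+t\xi)(\mathbb{I}_{3\times 3}+t\nabla\xi)\big)\dint X$ cannot be controlled as $t\to 0$, since the densities $W_i$ carry no upper growth bound (this is the classical obstruction to inner variations in polyconvex elasticity, which your localization was meant to circumvent). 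Second, even granting such an open set $\omega$, the function $\phi\circ z$ may be a.e.\ constant on $\omega$ — the interface of $\phi\circ z$ can lie entirely in the complement, where $\nabla z$ degenerates — so that every admissible $\xi$ has vanishing slope and the intermediate value argument cannot start. Third, on $z(\Omega)\setminus y(\Omega)$ the limit $\phi$ is constrained only by $\Vert\phi\Vert_\infty\le 1$, not by $\{0,1\}$-valuedness, so $\phi\circ z\equiv\eta$ is possible, in which case $\int_\Omega(\phi\circ z)\,\mathrm{div}\,\xi=0$ for \emph{every} compactly supported $\xi$. So the recovery sequence is not constructed in the generality your argument needs, and this step remains a gap — though, to be fair, it is a gap concerning a point that the paper's own proof resolves by assertion rather than by the $\Gamma$-limsup-type construction your outline correctly calls for.
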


\begin{proof}

Let us first check that the infimum in \eqref{eq:to2} is not $\infty$. To this aim, we construct an admissible triplet in the domain of the functional. Let $\phi_0=1_H\in SBV(\RR^3;\{0,1\})$, where $H\subset \RR^3$ is a half-space with ${\mathcal L}^3(\Omega \cap H)= \eta {\mathcal L}^3(\Omega)$. Call $P_0 = \partial H\subset\RR^3$ and let 
    $N_0\KB\in\SS^2\EEE$ be the unit normal vector to $P_0$ pointing towards the interior of $H$. Moreover, let $V_0\KB\in CV_2^o(\RR^3)\EEE$ be as in \eqref{eq:rec_or_varif} with $M=P_0$, $\nu^M(x)=N_0$, $\theta^+(x)=1$, and $\theta^-(x)=0$ for $x\in M$. This entails in particular that $V_0$ and $\phi_0$ are coupled in $\RR^3$.  Now, $y_0=\mathrm{id}$ and $\phi_0$ satisfy the constraint \eqref{eq:volume}, so by the discussion after \eqref{eq:Yphi_def}, the set ${\mathcal Y}(\phi_0)$ is not empty. Let now $y_\ast \in \mathcal{Y}(\phi_0)$ be given. We readily check that $(y_\ast, \phi_0\vert_{y_\ast(\Omega)},V_0\llc (y_\ast(\Omega)\times \SS^2))\in\mathcal{A}$. Moreover, as $P_0$ being a plane implies $A^{\qsharp V_0} \equiv 0$, we have that 
    $$C(y_\ast,\phi_0\vert_{y_\ast(\Omega)}) + \Einterface(y_\ast,V_0\llc (y_\ast(\Omega)\times \SS^2)) = C(y_\ast,\phi_0\vert_{y_\ast(\Omega)}) + \Psi(0){\mathcal H}^2 (P_0 \cap y_\ast(\Omega))<\infty,$$
    since $y_\ast \in W^{1,r}(\Omega;\RR^3)$, $r>3$,  implies that $y_\ast(\Omega)$ is bounded.
    
    Let $(y_n,\phi_n,V_n)$ be an infimizing sequence
  for problem \eqref{eq:to2}.
  \MAK By comparing with the identity in \eqref{eq:Yphi_def}, we observe that \EEE
  $\mathcal{Y}(\phi_n)$ is bounded in $W^{1,r}(\Omega;\RR^3)$, independently of $n\in \NN$. \MAK We \EEE may argue as in the proof of Theorem \ref{thm:main} to conclude that, after passing to a not relabeled subsequence, there exist $\phi\in L^\infty(\RR^3)$, $y\in {\mathcal Y}$, and $V\in CV_2^o(y(\Omega))$ with $(y,\phi\vert_{y(\Omega)},V)\in \mathcal{A}$ such that in particular 
  \begin{align}
    &y_n \to y \quad \text{in}  \  C^0(\bar \Omega;{\mathbb R}^3). \label{eq:c1}\\
    &\phi_n \wto^* \phi \quad \text{in} \ L^\infty(\RR^3),  \ \ \phi_n \to \phi\quad \text{in} \ L^1_\mathrm{loc}(y(\Omega)),\label{eq:c2}\\
    &\phi_n\circ y_n \to \phi\circ y\quad \text{in} \ L^1(\Omega),\label{eq:c3}
  \end{align}
and, we have by lower semicontinuity and \eqref{eq:c2} that $\Vert\phi\Vert_\infty \leq \liminf_{n\to\infty}\Vert \phi_n\Vert_\infty\leq 1$, as well as
  \begin{align} 
    \Ebulk(y,\phi)&\leq \liminf_{n\to\infty} \Ebulk(y_n, \phi_n),\label{eq:c6}\\
    \Einterface(y, V) &\leq \liminf_{n\to\infty} \Einterface(y_n,V_n).\label{eq:Eint_lsc}
  \end{align}
Equation \eqref{eq:c1} and \eqref{eq:c3} also imply
  \begin{align}
      C(y, \phi) &= \lim_{n\to\infty} C(y_n, \phi_n).\label{eq:c7}
  \end{align}
As the \MAK mass \EEE constraint \eqref{eq:volume} passes to the limit under \MAK \eqref{eq:c3}, 
\EEE from \eqref{eq:c1}, \eqref{eq:c3}, and \eqref{eq:c6} we get that $y \in {\mathcal Y}(\phi)$. Hence, owing to inequality \eqref{eq:Eint_lsc} and the convergence \eqref{eq:c7} we conclude that $(y,\phi,V)$ solves the topology optimization problem~\eqref{eq:to2}.
\end{proof}

\begin{remark}[Worst-case-scenario compliance]
    Given the Eulerian material distribution $\phi$, the set ${\mathcal Y}(\phi)$ may contain more than one equilibrium, for uniqueness may genuinely fail \cite{Spadaro}. In order to tackle this indeterminacy, one could consider solving a topology optimization problem \eqref{eq:to2} where the compliance $C(y,\phi)$ is replaced by the {\it worst-case-scenario} compliance
    $$C_{\rm max}(\phi) = \max_{y\in {\mathcal Y}(\phi)} C(y,\phi).$$
    Note that $C_{\rm max}(\phi)$ can be proved to be well-defined, as soon as ${\mathcal Y}(\phi)$ is not empty. 
    
    To treat the worst-case-scenario-compliance case, however, one needs to require some stability of \KB the \EEE set ${\mathcal C}(\phi) \subset {\mathcal Y}(\phi)$ \KB given by \EEE those equilibria $y\in {\mathcal Y}(\phi)$ realizing the maximum, namely, such that $C(y,\phi)=C_{\rm max}(\phi)$. A condition which would ensure the validity of an existence result in the spirit of Theorem~\ref{thm4} would be 
    \begin{align}
    &\forall \phi_n ,\, \phi \in L^\infty(\Omega), \ \| \phi_n\|_\infty,\, \| \phi\|_\infty\leq 1, \ \phi_n \to \phi \ \text{in} \ L^1(\Omega), \ \forall y \in {\mathcal C}(\phi), \\
    &\qquad \qquad \exists\ y_n \in {\mathcal Y}(\phi_n):  \quad C(y,\phi)\leq \liminf_{n\to \infty } C(y_n,\phi_n).
    \end{align}
    The latter entails the existence of a recovery sequence for each equilibrium $y\in {\mathcal C}(\phi)$. In particular, it is trivially satisfied in case the set of equilibria ${\mathcal Y}(\phi)$ is a singleton, i.e., in case of uniqueness. Even in the case of nonuniqueness, the above condition holds if $C(y,\phi)$ takes the same value for all $y\in {\mathcal Y}(\phi)$. This is for instance the classical case of buckling of a rod under longitudinal compression. 
\end{remark}

\begin{remark}[A referential formulation] The fully Eulerian setting above can be computationally challenging. One could resort to a more classical referential setting by identifying the optimized body via $\varphi: \Omega \to \{0,1\}$ defined on the fixed reference configuration, while still retaining the penalization of the curvature of the referential boundary of the body, in addition to its
referential surface area. In this setting, the volume constraint \eqref{eq:volume} can be simplified \KB to \EEE $\| \varphi\|_1=\eta {\mathcal L}^3(\Omega)$ for some given $\eta \in (0,1)$. 
The set of equilibrium deformations related with $\varphi$
is  defined as  
\begin{align}
&{\mathcal Y}(\varphi)=\argmin_{y \in {\mathcal Y}}\left\{
 \int_\Omega\Big((1{-}\varphi)\,W_0(\nabla y)+\varphi\,W_1(\nabla y)\Big)\dint X
- \int_\Omega \varphi f\cdot y \,  {\rm d} X-
\int_{\Gamma_{\rm N}} g \cdot y \, {\rm d} \Haus^2\right\}
\end{align}
which can be readily checked to be not empty. By defining the {\it referential} compliance  as
$$C_{\rm ref}(y,\varphi) =  \int_\Omega \varphi f\cdot y
\, {\rm d} X + \int_{\Gamma_{\rm N}} g \cdot y \, {\rm d} \Haus^2,$$
the referential topology optimization problem reads 
\begin{align}
     &\min \bigg\{ C_{\rm ref}(y,\varphi)+ \int_{\Omega\times G_{2,3}} \Psi(A^{\qsharp V})\, {\rm d } (\qsharp V)
     \: : \: \varphi \in SBV(\Omega;\{0,1\}), \ \| \varphi\|_1=\eta {\mathcal L}^3(\Omega), \\
     &\qquad \qquad \qquad y \in {\mathcal Y}(\varphi), \ V \in CV_2^o(\Omega), \ \partial T_V=0, \, \varphi \text{ and }V\text{ are coupled in }\Omega\bigg\}.\label{eq:to}
\end{align}
Note that the varifold $V$ is now defined in the fixed set $\Omega \times \SS^2$, and we are using the same notation of Section \ref{sec:ocv}. By arguing along the lines above, one can prove that the referential topology optimization problem \eqref{eq:to} admits a solution.   
\end{remark}

	\section*{Acknowledgements}
 US and MK are partially funded by the Austrian Science Fund (FWF) and the Czech Science Foundation (GA\v CR) through project I\,5149/R\,21-06569K. US is supported by the FWF through projects F\,65  and I\,4354. US and FR are supported by the  FWF project P\,32788.

\begin{thebibliography}{10}

\bibitem{Allaire}
G.~Allaire.
\newblock {\em Shape optimization by the homogenization method}, volume 146 of
  {\em Applied Mathematical Sciences}.
\newblock Springer-Verlag, New York, 2002.

\bibitem{AmbBra90a}
L.~Ambrosio and A.~Braides.
\newblock Functionals defined on partitions of sets of finite perimeter, {II}:
  semicontinuity, relaxation and homogenization.
\newblock {\em J. Math. Pures. Appl.}, 69:307--333, 1990.

\bibitem{AmFuPa:00}
L.~Ambrosio, N.~Fusco, and D.~Pallara.
\newblock {\em Functions of bounded variation and free discontinuity problems}.
\newblock Oxford Mathematical Monographs. The Clarendon Press, Oxford
  University Press, New York, 2000.

\bibitem{Baldo90}
S.~Baldo.
\newblock Minimal interface criterion for phase transitions in mixtures of
  {C}ahn--{H}illiard fluids.
\newblock {\em Annales de l'Institut Henri Poincaré C, Analyse non linéaire},
  7(2):67--90, 1990.

\bibitem{Ball:77}
J.~Ball.
\newblock Convexity conditions and existence theorems in nonlinear elasticity.
\newblock {\em Arch. Ration. Mech. Anal.}, 63:337--403, 1977.

\bibitem{Ball_puzzles}
J.~Ball.
\newblock Progress and puzzles in nonlinear elasticity.
\newblock In J.~Schr\"oder and P.~Neff, editors, {\em Poly-, Quasi- and
  Rank-One Convexity in Applied Mechanics}, CISM Intl. Centre for Mech. Sci.
  516, pages 1--15. Springer, Wien, 2010.

\bibitem{BallMora-Corral-2009}
J.~Ball and C.~Mora-Corral.
\newblock A variational model allowing both smooth and sharp phase boundaries
  in solids.
\newblock {\em Comm. Pure Appl. Anal.}, 8:55--81, 2009.

\bibitem{Bendsoe}
M.~P. Bends{\o}e and O.~Sigmund.
\newblock {\em Topology optimization}.
\newblock Springer-Verlag, Berlin, 2003.
\newblock Theory, methods and applications.

\bibitem{BeKrSc17NLMGP}
B.~Bene{\v{s}}ov{\'a}, M.~Kru{\v{z}}{\'\i}k, and A.~Schl{\"o}merkemper.
\newblock A note on locking materials and gradient polyconvexity.
\newblock {\em Math. Models Meth. Appl. Sci.}, 28:2367--2401, 2018.

\bibitem{BLS:20}
K.~Brazda, L.~Lussardi, and U.~Stefanelli.
\newblock Existence of varifold minimizers for the multiphase
  {C}anham-{H}elfrich functional.
\newblock {\em Calc. Var. Partial Differential Equations}, 59(3):Paper No. 93,
  26, 2020.

\bibitem{cahn1982transitions}
J.~Cahn.
\newblock Transitions and phase equilibria among grain boundary structures.
\newblock {\em J. phys. Colloques}, 43(C6):199--213, 1982.

\bibitem{CiaNec87ISCN}
P.~Ciarlet and J.~Ne{\v{c}}as.
\newblock Injectivity and self-contact in nonlinear elasticity.
\newblock {\em Arch. Ration. Mech. Anal.}, 97:171--188, 1987.

\bibitem{Daco89DMCV}
B.~Dacorogna.
\newblock {\em Direct Methods in the Calculus of Variations}.
\newblock Springer, New York, 2007.

\bibitem{EichmannAGAG}
S.~Eichmann.
\newblock Lower semicontinuity for the {H}elfrich problem.
\newblock {\em Ann. Global Anal. Geom.}, 58(2):147--175, 2020.

\bibitem{Eisen}
G.~Eisen.
\newblock A selection lemma for sequences of measurable sets, and lower
  semicontinuity of multiple integrals.
\newblock {\em Manuscripta Math.}, 27(1):73--79, 1979.

\bibitem{gao2017curvature}
X.~Gao, Z.~Huang, and D.~Fang.
\newblock Curvature-dependent interfacial energy and its effects on the elastic
  properties of nanomaterials.
\newblock {\em Int. J. Solids Struct.}, 113:100--107, 2017.

\bibitem{MR2644754}
M.~Giaquinta, P.~M. Mariano, and G.~Modica.
\newblock A variational problem in the mechanics of complex materials.
\newblock {\em Discrete Contin. Dyn. Syst.}, 28(2):519--537, 2010.

\bibitem{MR2658342}
M.~Giaquinta, P.~M. Mariano, G.~Modica, and D.~Mucci.
\newblock Ground states of simple bodies that may undergo brittle fractures.
\newblock {\em Phys. D}, 239(15):1485--1502, 2010.

\bibitem{GKMS19}
D.~Grandi, M.~Kru\v{z}\'{\i}k, E.~Mainini, and U.~Stefanelli.
\newblock A phase-field approach to {E}ulerian interfacial energies.
\newblock {\em Arch. Ration. Mech. Anal.}, 234(1):351--373, 2019.

\bibitem{grandi2020}
D.~Grandi, M.~Kru{\v{z}}{\'\i}k, E.~Mainini, and U.~Stefanelli.
\newblock Equilibrium for multiphase solids with {E}ulerian interfaces.
\newblock {\em Journal of Elasticity}, 142(2):409--431, 2020.

\bibitem{gurtin1975continuum}
M.~E. Gurtin and A.~Ian~Murdoch.
\newblock A continuum theory of elastic material surfaces.
\newblock {\em Arch. Ration. Mech. Anal.}, 57:291--323, 1975.

\bibitem{gurtin1998general}
M.~E. Gurtin, J.~Weissm{\"u}ller, and F.~Larche.
\newblock A general theory of curved deformable interfaces in solids at
  equilibrium.
\newblock {\em Phil. Mag. A}, 78(5):1093--1109, 1998.

\bibitem{HenKos14LMFD}
S.~Hencl and P.~Koskela.
\newblock {\em Lectures on Mappings of Finite Distortion}.
\newblock Springer, Cham, 2014.

\bibitem{Hutchinson:86}
J.~E. Hutchinson.
\newblock Second fundamental form for varifolds and the existence of surfaces
  minimising curvature.
\newblock {\em Indiana Univ. Math. J.}, 35(1):45--71, 1986.

\bibitem{Jani2014ARO}
J.~M. Jani, M.~Leary, A.~Subic, and M.~A. Gibson.
\newblock A review of shape memory alloy research, applications and
  opportunities.
\newblock {\em Materials \& Design}, 56:1078--1113, 2014.

\bibitem{KrMaMu:22}
M.~Kru\v{z}\'{\i}k, P.~M. Mariano, and D.~Mucci.
\newblock Crack occurrence in bodies with gradient polyconvex energies.
\newblock {\em J. Nonlinear Sci.}, 32(1):Paper No. 16, 26, 2022.

\bibitem{manolikas1986local}
C.~Manolikas, G.~Van~Tendeloo, and S.~Amelinckx.
\newblock The “local” structure of domain boundaries in ferroelastic lead
  orthovanadate.
\newblock {\em Solid state communications}, 58(12):851--855, 1986.

\bibitem{Mantegazza:96}
C.~Mantegazza.
\newblock Curvature varifolds with boundary.
\newblock {\em J. Differential Geom.}, 43(4):807--843, 1996.

\bibitem{Menne}
U.~Menne.
\newblock Second order rectifiability of integral varifolds of locally bounded
  first variation.
\newblock {\em J. Geom. Anal.}, 23(2):709--763, 2013.

\bibitem{Moser:12}
R.~Moser.
\newblock Towards a variational theory of phase transitions involving
  curvature.
\newblock {\em Proc. Roy. Soc. Edinburgh Sect. A}, 142(4):839--865, 2012.

\bibitem{RS:23}
F.~Rupp and C.~Scharrer.
\newblock Li-{Y}au inequalities for the {H}elfrich functional and applications.
\newblock {\em Calc. Var. Partial Differential Equations}, 62(2):Paper No. 45,
  43, 2023.

\bibitem{Silhavy-2010}
M.~{\v{S}}ilhav{\'y}.
\newblock Phase transitions with interfacial energy: Interface null
  {L}agrangians, polyconvexity, and existence.
\newblock In K.~Hackl, editor, {\em IUTAM Symposium on Variational Concepts
  with Applications to the Mechanics of Materials}, pages 233--244, Dordrecht,
  2010. Springer Netherlands.

\bibitem{Silhavy-2011}
M.~{\v{S}}ilhav{\'y}.
\newblock Equilibrium of phases with interfacial energy: A variational
  approach.
\newblock {\em J. Elasticity}, 105(1-2):271--303, 2011.

\bibitem{Simon:83}
L.~Simon.
\newblock {\em Lectures on geometric measure theory}, volume~3 of {\em
  Proceedings of the Centre for Mathematical Analysis, Australian National
  University}.
\newblock Australian National University, Centre for Mathematical Analysis,
  Canberra, 1983.

\bibitem{Spadaro}
E.~N. Spadaro.
\newblock Non-uniqueness of minimizers for strictly polyconvex functionals.
\newblock {\em Arch. Ration. Mech. Anal.}, 193(3):659--678, 2009.

\bibitem{Toup62EMCS}
R.~Toupin.
\newblock Elastic materials with couple stresses.
\newblock {\em Arch. Ration. Mech. Anal.}, 11:385--414, 1962.

\end{thebibliography}

\end{document}